\documentclass[11pt]{amsart}

\usepackage{amsmath,amssymb,amsthm}
\usepackage{upgreek}
\usepackage{amsaddr}
\usepackage{mathtools}
\usepackage{geometry}
\usepackage{graphicx}
\usepackage{booktabs}
\usepackage{microtype}
\usepackage{hyperref}
\usepackage{mathrsfs}

\hypersetup{
    hidelinks,
    pdftitle={Mixed Volumes, the Wigner Caustic, and Isoperimetric Inequalities},
    pdfauthor={Michal Zwierzynski},
    pdfsubject={Convex geometry, mixed volumes, and Wigner caustics},
    pdfkeywords={central symmetral, Wigner caustic, mixed volume,
    isoperimetric inequality, constant width}
}

\allowdisplaybreaks
\numberwithin{equation}{section}

\newtheorem{theorem}{Theorem}[section]
\newtheorem{lemma}[theorem]{Lemma}
\newtheorem{proposition}[theorem]{Proposition}
\newtheorem{corollary}[theorem]{Corollary}

\theoremstyle{definition}
\newtheorem{definition}[theorem]{Definition}

\theoremstyle{remark}
\newtheorem{remark}[theorem]{Remark}

\newcommand{\RR}{\mathbb{R}}
\newcommand{\ZZ}{\mathbb{Z}}
\newcommand{\SSph}{\mathbb{S}}
\newcommand{\Gr}{\operatorname{Gr}}
\newcommand{\Vol}{\operatorname{Vol}}
\newcommand{\Area}{\operatorname{Area}}

\newcommand{\tr}{\operatorname{tr}}
\newcommand{\Id}{\operatorname{Id}}
\newcommand{\Wig}{\mathcal{W}}
\newcommand{\dd}{\,\mathrm{d}}
\newcommand{\restr}{\mathbin{|}}

\newcommand{\heven}{h_{\mathrm{even}}}
\newcommand{\hodd}{h_{\mathrm{odd}}}

\newcommand{\doi}[1]{\href{https://doi.org/#1}{\nolinkurl{doi:#1}}}

\title[The Wigner Caustic and Isoperimetric Inequalities]
{Mixed Volumes, the Wigner Caustic, and Isoperimetric Inequalities}

\author{Micha\l{} Zwierzy\'nski}

\email{Michal.Zwierzynski@pw.edu.pl}
\email{ORCID: 0000-0002-9627-1563}

\address{Warsaw University of Technology\\
Faculty of Mathematics and Information Science\\
ul. Koszykowa 75\\
00-662 Warsaw, Poland}

\subjclass[2020]{Primary 52A40;
Secondary 52A39, 53A07}

\keywords{central symmetral, Wigner caustic, projective hedgehog,
mixed volume, quermassintegral, isoperimetric inequality, constant width}

\begin{document}

\begin{abstract}
Let $K\subset\RR^n$ be a~convex body and
$C=\frac12(K+(-K))$ its central symmetral. We study the~defects
$\mathcal A_k(K)=W_{n-k}(C)-W_{n-k}(K)$ through the~even--odd
decomposition of the~support function. We derive exact even
mixed-volume expansions, identify their Krawtchouk transform with the~
mixed difference-body coefficients, and obtain a~Kubota formula
expressing each defect as the~averaged symmetrization gain of
projections. These identities yield strengthened isoperimetric
inequalities. The~quadratic defect is the~average of the~absolute
oriented areas of projected Wigner caustics and obeys a~sharp
spherical-harmonic stability estimate. We give explicit formulas in
dimensions $2$, $3$, and $4$. The~four-dimensional expansion includes
a~sign-indefinite quartic Wigner volume; on a~plane of cubic harmonics
we find nonspherical constant-width bodies for which it vanishes.
Finally, we prove a~projection-transfer bound and an~
arbitrary-dimensional expansion for constant-width bodies.
\end{abstract}

\maketitle

\section{Introduction}

\noindent
The~planar improved isoperimetric inequality
\begin{equation}
L(K)^2\geqslant
4\uppi\left(\Area(K)+2\left|A^*_{\Wig}(K)\right|\right)
\label{eq:planar-improved}
\end{equation}
relates the~perimeter and area of a~convex body to the~oriented area
of the~Wigner caustic of its boundary.  Equality holds precisely for
bodies of constant width.  This inequality was proved in
\cite[Theorem~3.4]{Zwier2016} and is the~starting point of this paper.
Exact planar relations involving the~same oriented area, the~Constant
Width Measure Set, and the~Spherical Measure Set appear in
\cite[Theorem~2.17]{ZwierRosettes} and
\cite[Theorem~5.1]{ZwierCWMS}. A~normed-plane analogue of the~Wigner caustic and the~Constant Width Measure Set, together
with corresponding improved isoperimetric inequalities for admissible
curves, was developed in
\cite[Sections~4--5]{dosSantosCraizer2022}.   The~singularities and global geometry
of planar Wigner caustics and affine equidistants are treated in
\cite[Section~2]{DomitrzZwierSingular} and
\cite[Sections~3--6]{DomitrzZwier2022}.  The~related Centre Symmetry
Set was introduced by Janeczko as the~bifurcation set of a~family of
ratios \cite{Janeczko1996} and reformulated by Giblin and Holtom as the~envelope of chords joining parallel-tangent pairs
\cite{GiblinHoltom}; its planar geometry is developed further in
\cite{MillerZwier2026}.  Schneider's middle hedgehog
extends the~planar construction beyond smooth ovals
\cite[Sections~2--3]{SchneiderMiddle}, while recent support-function
visualizations are given in \cite{DanielewskaEtAl}.  Convex polygons
with parallel opposite sides, together with their discrete
equidistants and centre symmetry sets, were introduced in
\cite{CraizerTeixeiraSilva}; a~discrete counterpart of
\eqref{eq:planar-improved} for this class was subsequently obtained in
\cite[Theorem~4.4]{KonicerEtAl2026}.  Related Fourier decompositions
and isoperimetric refinements occur in the~work on Hurwitz-type
deficits \cite{CufiGallegoReventos}, higher-order discrete and smooth
inequalities \cite{Kwong2024}, mixed isoperimetric deficits
\cite{Zhang2021}, and higher-order preserving harmonic and midpoint sets
\cite[Theorem~5.2]{SafarewiczZwier2026}.

The~Wigner caustic itself goes back to Berry's phase-space
construction \cite[Section~6]{Berry1977}.  The~role of odd functions
in the~singularity theory of higher-dimensional symplectic Wigner
caustics was studied in \cite{DomitrzManoelRios}.  Affine
equidistants and global centre symmetry sets of Lagrangian
submanifolds are developed further in
\cite{DomitrzRios,DomitrzRiosRuas}; algebraic symmetry-defect loci are
studied in \cite{DiasFarnikJelonek,JaneczkoJelonekRuas}.  The~
convex-hypersurface setting considered here is different, since the~antipodal Gauss directions select one distinguished pair of parallel
tangent hyperplanes.

The~purpose of this paper is to identify the~higher-dimensional
mixed-volume mechanism behind \eqref{eq:planar-improved}.  It is
elementary at its core:
one applies classical inequalities to the~central symmetral
\begin{equation}
C=\frac12(K+(-K)),
\label{eq:central-symmetral-intro}
\end{equation}
whose support function is the~even part of the~support function of
$K$.  The~odd part is not the~support function of a~convex body in
general, but it is a~virtual convex body and hence determines a~hedgehog.
For smooth strictly convex bodies its geometric image is exactly the~Wigner caustic.  The~formal-difference viewpoint was initiated in
\cite{LLR88}; projective hedgehogs and their offset constructions are
also treated in \cite{Rochera2022}; algebraic mixed volumes of
hedgehogs are developed systematically in
\cite[Chapter~3, pp.~67--83]{MMBook}.  In the~planar smooth case the~identity $x_h(u)-x_h(-u)=2x_{\heven}(u)$ shows that the~secant caustic
of $\partial K$ is $2\partial C$.  Thus the~central symmetral is, up to
this homothety and filling the~bounded component, a~special case of
the~secant-caustic construction; we use the~normalization of
\cite[Definition~2.4 and Corollary~4.3]{DomitrzRomeroZwier2021}.
This decomposition turns the~planar correction in
\eqref{eq:planar-improved} into a~general mixed-volume mechanism.

Our main points are as follows.

\begin{enumerate}
\renewcommand{\labelenumi}{\textup{(\roman{enumi})}}

\item
For every $k=1,\ldots,n$, the~difference between $W_{n-k}(K)$ and the~
same quermassintegral of $C$ has an~exact expansion containing only
even numbers of copies of the~Wigner support function.  The~same
defect is the~mean symmetrization gain of the~$k$-dimensional
projections of $K$, which yields positivity and a~sharp
Rogers--Shephard bound.  Moreover, the~mixed volumes of $K$ and $-K$
are, at top degree, exactly the~Krawtchouk transform of these even
Wigner terms.  The~relevant results are
Theorems~\ref{thm:exact-expansion} and~\ref{thm:positive-defects},
and Propositions~\ref{prop:projection-defect}, \ref{prop:RS-bound},
and~\ref{prop:Krawtchouk-transform}.

\item
Applying the~Alexandrov--Fenchel inequalities to $C$ gives a~family
which genuinely strengthens the~generalized Urysohn inequalities for
$K$.  Its $k=2$ member has the~explicit Wigner correction
\begin{equation*}
\frac{2\kappa_n}{\uppi}
\int_{\Gr(n,2)}
\left|A^*_{\Wig}(K\restr E)\right|\dd E.
\end{equation*}
The~family is stated in Theorem~\ref{thm:isoperimetric-family};
its Alexandrov--Fenchel input is clarified in
Proposition~\ref{prop:corrected-AF}, while the~explicit Wigner
interpretation of the~$k=2$ correction follows from
Theorem~\ref{thm:Wigner-projection}.

\item
The~quadratic Wigner correction is a~positive spectral energy of the~
odd part of the~support function.  It controls, with a~sharp constant,
the~squared $L^2$-distance from central symmetry after translations
are removed.  These statements are proved in
Theorem~\ref{thm:spectral-A2} and
Corollary~\ref{cor:stability}.

\item
The~cases $n=2,3,4$ are made explicit.  In dimension four, a~pure
quartic mixed volume $V(\hodd^{[4]})$ appears.  It is the~Gauss-cover
algebraic volume of the~projective Wigner hedgehog and assumes both
signs arbitrarily close to a~ball.  On an~explicit plane of cubic
harmonics we determine the~quartic form, its zero cone, and the~optimal
quartic-to-quadratic ratios.  A~termwise difference-body estimate
sharpens the~lower boundary of the~quadratic--quartic feasible region,
while projection to three dimensions transfers Nishioka's bound
\cite[Theorem~1]{Nishioka2026} to
$\mathcal A_2\leqslant\kappa_4r^2/11$ at width $2r$.
The~dimension-specific formulas are developed in
Sections~\ref{sec:dimension-two}, \ref{sec:dimension-three},
and~\ref{sec:dimension-four}.  The~four-dimensional sign,
cubic-harmonic, and feasible-region results are given in
Lemma~\ref{lem:quartic-signs},
Proposition~\ref{prop:cubic-harmonic-plane}, and
Theorem~\ref{thm:quartic-feasible}, respectively; the~projection
argument uses Theorem~\ref{thm:projection-transfer}.

\item
For constant-width bodies in arbitrary dimension, every
quermassintegral has an~even Wigner expansion.  In even dimensions the~
top-degree projective Wigner volume survives as the~residual term in
the~volume--surface relation.  These results are stated in
Theorem~\ref{thm:CW-expansion} and
Corollary~\ref{cor:even-dimensional-residual}.

\end{enumerate}

There is substantial recent activity around constant-width bodies and
difference-body methods.  Mixed-volume reformulations of the~Blaschke--Lebesgue problem were obtained by Bogosel
\cite{BogoselMixed}; variational methods and Cheeger-type extremal
questions were studied in \cite{BogoselVariational,BogoselCheeger}.
In dimension three, the~support-function formula of Anciaux and
Guilfoyle \cite{AnciauxGuilfoyle} underlies recent spectral progress,
including Nishioka's improved lower bound for the~
Blaschke--Lebesgue problem \cite{Nishioka2026}.  Higher-order
difference bodies and their affine inequalities are developed in
\cite{HaddadEtAl}.  A~termwise generalization of the~higher-order
difference-body inequality, together with its anti-blocking case and
valuation interpretation, appears in \cite{Kotrbaty2025}; a~recent preprint proves
the~original Godbersen mixed-volume inequalities in full
\cite[Theorem~1.1]{KotrbatyMouamine2026}.  Refined mean-width
inequalities for
origin-symmetric bodies appear in \cite{BoroFodorHug}.  These works
address different extremal or normalized problems.  The~contribution
here is the~unified Wigner interpretation of central-symmetral
corrections and the~resulting all-dimensional family.  General
background on constant-width bodies may be found in \cite{MMO}.

Algebraic volumes and
mixed volumes of hedgehogs are established notions; the~novelty claim
concerns the~identities and consequences above, not the~classical
Brunn--Minkowski, Alexandrov--Fenchel, Kubota, or Rogers--Shephard
ingredients; see \cite[Chapter~3, pp.~67--83]{MMBook} for the~algebraic mixed-volume framework for hedgehogs.

Throughout, mixed-volume conventions and the~differential formulas
are those of \cite[Sections~5.1 and~5.5]{Schneider}; the~precise
inequality and integral-geometric references are given where they are
used.

\section{Support functions, hedgehogs, and mixed volumes}

\subsection{Convex bodies and quermassintegrals}

\noindent
Let $K\subset\RR^n$ be a~convex body, i.e. a~compact convex set with
non-empty interior.  Its support function is
\begin{equation*}
h_K(u)=\max_{x\in K}\langle x,u\rangle,
\qquad u\in\SSph^{n-1}.
\end{equation*}
We write $B^n$ for the~Euclidean unit ball and
$\kappa_n=\Vol_n(B^n)$.
The~mixed volume of $n$ convex bodies is the~symmetric coefficient
defined by the~volume polynomial.  Equivalently,
\begin{equation}
V(K_1,\ldots,K_n)
=
\left.
\frac{1}{n!}
\frac{\partial^n}{\partial t_1\cdots\partial t_n}
\Vol_n(t_1K_1+\cdots+t_nK_n)
\right|_{t_1=\cdots=t_n=0}.
\label{eq:mixed-volume-definition}
\end{equation}
See \cite[Section~5.1]{Schneider}.  Repeated arguments are written as
$V(K^{[r]},L^{[s]},B^{[t]})$.
The~quermassintegrals are normalized by
\begin{equation}
W_j(K)=V(K^{[n-j]},B^{[j]}),
\qquad j=0,\ldots,n.
\label{eq:quermass-normalization}
\end{equation}
Thus
\begin{equation}
\Vol_n(K+tB^n)
=
\sum_{j=0}^n\binom{n}{j}W_j(K)t^j,
\label{eq:Steiner}
\end{equation}
and
\begin{equation}
W_0(K)=\Vol_n(K),
\qquad
W_n(K)=\kappa_n,
\qquad
nW_1(K)=S(K),
\label{eq:quermass-special}
\end{equation}
where $\mathcal H^{n-1}$ denotes $(n-1)$-dimensional Hausdorff
measure and $S(K)=\mathcal H^{n-1}(\partial K)$.

We first work with a~convex body of class $C^2_+$, meaning that
$\partial K$ is $C^2$ and has positive Gauss curvature.  Every convex body is a~Hausdorff limit of convex bodies of class
$C^\infty_+$: one applies the~smoothing procedure from
\cite[Theorem~3.4.1]{Schneider} and then adds a~Euclidean ball of
vanishing radius.  Since mixed volumes, and hence quermassintegrals, are
continuous under Hausdorff convergence
\cite[pp.~280--281]{Schneider}, the~inequalities involving only convex
bodies and quermassintegrals extend to arbitrary convex bodies.

For $f\in C^2(\SSph^{n-1})$, put
\begin{equation}
Q_f=\nabla^2_{\SSph^{n-1}}f+f\Id.
\label{eq:Q-def}
\end{equation}
The~associated hedgehog is parametrized by
\begin{equation}
x_f(u)=f(u)u+\nabla_{\SSph^{n-1}}f(u).
\label{eq:hedgehog-param}
\end{equation}
If $f=h_K$, then $Q_f$ is positive definite and
\eqref{eq:hedgehog-param} is the~inverse Gauss-map parametrization of
$\partial K$; see \cite[Section~2.5]{Schneider}.
Conversely, for every smooth $f$, the~operator $Q_{f+R}=Q_f+R\Id$ is
positive definite when $R$ is sufficiently large.  Thus $f+R$ is a~support function and $f=(f+R)-R$ is a~formal difference of support
functions.  This gives a~direct multilinear extension of mixed volume
to all smooth functions.

Let $D_{n-1}$ be the~polarized determinant on symmetric endomorphisms
of an~$(n-1)$-dimensional vector space, normalized by
$D_{n-1}(A,\ldots,A)=\det A$.
The~mixed-volume form extends from support functions of convex bodies
to arbitrary smooth functions by
\begin{equation}
V(f_1,\ldots,f_n)
=
\frac1n
\int_{\SSph^{n-1}}
f_1
D_{n-1}(Q_{f_2},\ldots,Q_{f_n})
\dd\sigma.
\label{eq:mixed-volume-integral}
\end{equation}
Integration by parts makes \eqref{eq:mixed-volume-integral} symmetric
in all arguments.  This is
the~usual mixed volume on convex support functions and its multilinear
extension to virtual convex bodies or hedgehogs; see
\cite[Sections~5.1 and~5.5]{Schneider}.  In particular, in expressions such as
$V(f^{[r]},g^{[s]},1^{[t]})$, the~function $1$ is the~support function
of $B^n$.

If $\ell(u)=\langle a,u\rangle$ is linear, then
\begin{equation}
Q_\ell=0.
\label{eq:linear-kernel}
\end{equation}
Consequently, every mixed volume with one linear argument vanishes.
This is the~support-function form of translation invariance.

\subsection{The~even--odd decomposition}

\noindent
Write $h=h_K$ and decompose
\begin{align}
h=\hodd+\heven,
\quad\text{where}\quad
\heven(u)=\frac{h(u)+h(-u)}2,
\qquad
\hodd(u)=\frac{h(u)-h(-u)}2.
\label{eq:even-odd-decomposition}
\end{align}
Thus $\heven$ is even and $\hodd$ is odd.  The~notation records the~parity explicitly and will be used throughout.

\begin{definition}[Wigner caustic]
\label{def:Wigner-caustic}
For a~convex body $K\subset\RR^n$ of class $C^2_+$, its Wigner
caustic is the~midpoint
locus
\begin{equation}
\mathrm{E}_{1/2}(\partial K)=
\left\{\frac{x_h(u)+x_h(-u)}2:u\in\SSph^{n-1}\right\}.
\label{eq:Wigner-caustic-definition}
\end{equation}
The~two boundary points in each pair have opposite outer normals and
hence parallel tangent hyperplanes.
\end{definition}

\begin{proposition}[Central symmetral and Wigner hedgehog]
\label{prop:geometry-decomposition}
Let $K\subset\RR^n$ be a~convex body of class $C^2_+$ with support
function $h=\hodd+\heven$.  Then $\heven$ is the~support function of
the~central symmetral
\begin{equation}
C=\frac12\bigl(K+(-K)\bigr).
\label{eq:central-symmetral}
\end{equation}
Moreover,
\begin{align}
x_{\heven}(u)
&=
\frac{x_h(u)-x_h(-u)}2,
\label{eq:even-hedgehog}\\
x_{\hodd}(u)
&=
\frac{x_h(u)+x_h(-u)}2.
\label{eq:odd-hedgehog}
\end{align}
In particular, $x_{\hodd}(-u)=x_{\hodd}(u)$, and the~image of
$x_{\hodd}$ is $\mathrm{E}_{1/2}(\partial K)$, parametrized through the~double cover $\SSph^{n-1}\to\mathbb{RP}^{n-1}$.  We call the~resulting
parametrized object the~projective Wigner hedgehog; its geometric image
is the~Wigner caustic.
\end{proposition}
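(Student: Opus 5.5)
The plan is to verify everything directly from the definitions \eqref{eq:even-odd-decomposition} and \eqref{eq:hedgehog-param}, using two facts: support functions are additive under Minkowski addition and positively homogeneous, and the hedgehog map $f\mapsto x_f$ is linear in $f$.

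\textbf{Step 1 (support function of $C$).} For any convex body $L$ we have $h_{-L}(u)=\max_{x\in L}\langle -x,u\rangle=h_L(-u)$. Combining this with $h_{K_1+K_2}=h_{K_1}+h_{K_2}$ and $h_{\lambda K}=\lambda h_K$ for $\lambda\geqslant0$ gives $h_C(u)=\frac12(h(u)+h(-u))=\heven(u)$. Since $K$ is of class $C^2_+$, so is $-K$. Hence $C$ is a convex body with $C^2$ support function, and $Q_{\heven}=\frac12(Q_h+Q_{h_{-K}})$ is positive definite.

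\textbf{Step 2 (transformation of $x_f$ under the antipodal map).} Set $g(u)=f(-u)$ and $A(u)=-u$. Since $A$ is an isometry of the sphere with $dA=-\Id$ on tangent spaces (these are identified as $T_u\SSph^{n-1}=T_{-u}\SSph^{n-1}=u^\perp$), the chain rule gives $\nabla g(u)=-(\nabla f)(-u)$. Therefore
\[
x_g(u)=f(-u)u-(\nabla f)(-u)=-\bigl(f(-u)(-u)+(\nabla f)(-u)\bigr)=-x_f(-u).
\]
The only care needed is the sign bookkeeping for the tangent-space identification. One can also check it by extending $f$ $1$-homogeneously to $\RR^n\setminus\{0\}$, since then $x_f=\nabla f$ restricted to the sphere; $g$ is the $1$-homogeneous extension composed with $-\Id$, so $\nabla g(u)=-(\nabla f)(-u)$ follows immediately. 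I would present this homogeneous-extension argument as the cleanest route.

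\textbf{Step 3 (the two identities).} By linearity of $f\mapsto x_f$ and Step 2,
\[
x_{\heven}(u)=\tfrac12\bigl(x_h(u)-x_h(-u)\bigr),\qquad x_{\hodd}(u)=\tfrac12\bigl(x_h(u)+x_h(-u)\bigr),
\]
which are \eqref{eq:even-hedgehog} and \eqref{eq:odd-hedgehog}. The second formula is visibly invariant under $u\mapsto-u$. So $x_{\hodd}$ factors through $\SSph^{n-1}\to\mathbb{RP}^{n-1}$, and its image coincides with the set in \eqref{eq:Wigner-caustic-definition}.

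Finally, $x_h(\pm u)$ are the boundary points with outer normals $\pm u$ (Schneider, Section~2.5). This justifies the remark about parallel tangent hyperplanes. I expect no real obstacle; the one point requiring attention is the gradient sign in Step 2.
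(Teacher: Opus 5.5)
Your proposal is correct and follows essentially the paper's route. The support function of $C$ comes from Minkowski linearity together with $h_{-K}(u)=h(-u)$. The two hedgehog identities come from the chain rule with the antipodal differential $-\Id$ (on tangent spaces identified with $u^\perp$) and the linearity of $f\mapsto x_f$. Isolating the rule $x_g(u)=-x_f(-u)$ for $g(u)=f(-u)$, whether by the chain rule or via the $1$-homogeneous extension, is only a tidier packaging of the paper's ``differentiate the even and odd identities'' step.
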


\begin{proof}
The~support function of $-K$ is $u\mapsto h(-u)$, so Minkowski
linearity of support functions proves \eqref{eq:central-symmetral}.
The~antipodal map has
differential $-\Id$.  Differentiating the~even and odd identities for
$\heven$ and $\hodd$, and identifying $T_u\SSph^{n-1}$ and
$T_{-u}\SSph^{n-1}$ with $u^\perp$, and using
\eqref{eq:hedgehog-param}, gives \eqref{eq:even-hedgehog} and
\eqref{eq:odd-hedgehog}.  The~last assertion follows immediately from
the~oddness of $\hodd$.
\end{proof}

The~Wigner caustic can be singular and self-intersecting.  Formula
\eqref{eq:mixed-volume-integral}, rather than an~unsigned Hausdorff
measure of its image, is
therefore the~appropriate source of its algebraic geometric
quantities.

Translations leave $\heven$ fixed and add a~linear function to
$\hodd$.  Thus
all mixed-volume expressions below are translation invariant.  Also,
$K$ has constant width $2r$ if and only if
\begin{equation}
\heven\equiv r,
\label{eq:constant-width-even-part}
\end{equation}
or equivalently if and only if $C=rB^n$.

\subsection{Parity}

\begin{lemma}[Parity of mixed volumes]
\label{lem:parity}
Suppose that $f_1,\ldots,f_n$ are formal differences of support
functions and that each is either even or odd on $\SSph^{n-1}$.  If
an~odd number of these functions are odd, then
$V(f_1,\ldots,f_n)=0$.
\end{lemma}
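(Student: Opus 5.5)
The plan is to use the invariance of mixed volumes under the orthogonal map $x\mapsto -x$. I will first work with smooth functions $f_1,\ldots,f_n$, where formula \eqref{eq:mixed-volume-integral} is available, and then pass to general formal differences of support functions by multilinearity and continuity.

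Write $A(u)=-u$ for the antipodal map of $\SSph^{n-1}$ and put $(A^*f)(u)=f(-u)$. The key identity to establish is
\begin{equation*}
V(A^*f_1,\ldots,A^*f_n)=V(f_1,\ldots,f_n).
\end{equation*}
The argument runs as follows.

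\begin{enumerate}
\item Since $A$ is an isometry of the round sphere, it commutes with the spherical Hessian, so $Q_{A^*f}(u)=Q_f(-u)$. Here $T_u\SSph^{n-1}$ and $T_{-u}\SSph^{n-1}$ are both identified with $u^\perp$. The differential $-\Id$ appears twice in the Hessian, so the signs cancel.
\item Consequently $D_{n-1}(Q_{A^*f_2},\ldots,Q_{A^*f_n})(u)=D_{n-1}(Q_{f_2},\ldots,Q_{f_n})(-u)$.
\item The measure $\sigma$ is invariant under $A$, so the change of variables $u\mapsto -u$ in \eqref{eq:mixed-volume-integral} gives the identity.
\end{enumerate}

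For support functions the identity is also clear geometrically: $A^*h_K=h_{-K}$, and $\Vol_n$ is invariant under $x\mapsto -x$. Hence \eqref{eq:mixed-volume-definition} gives $V(-K_1,\ldots,-K_n)=V(K_1,\ldots,K_n)$. Multilinearity then extends this to formal differences, which removes the need for any smoothness assumption. I would present this geometric route as the main argument and keep the integral computation as a check.

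To conclude, apply the invariance to the given functions. Each even $f_i$ satisfies $A^*f_i=f_i$, and each odd $f_i$ satisfies $A^*f_i=-f_i$. If $m$ of the functions are odd, multilinearity gives
\begin{equation*}
V(f_1,\ldots,f_n)=V(A^*f_1,\ldots,A^*f_n)=(-1)^mV(f_1,\ldots,f_n).
\end{equation*}
When $m$ is odd this forces $V(f_1,\ldots,f_n)=0$.

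The only step that needs care is that the multilinear extension to formal differences is well defined, so that the identity for genuine bodies transfers to it. This holds because the extension is determined uniquely by multilinearity from values on support functions. Moreover, the decomposition $f=(f+R)-R$ stays compatible with $A^*$, since $A^*$ fixes the constant $R$ and maps support functions to support functions.
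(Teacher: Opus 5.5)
Your proposal is correct and follows the paper's proof: mixed volume is invariant under simultaneous antipodal reflection of all arguments, and multilinearity then produces the factor $(-1)^m$. The paper only asserts this reflection invariance, whereas you also justify it, geometrically via $h_{-K}(u)=h_K(-u)$ and $\Vol_n(-L)=\Vol_n(L)$ and then by multilinearity; this is a welcome but inessential addition.
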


\begin{proof}
For $f\in C^2(\SSph^{n-1})$, let $\widehat f(u)=f(-u)$.  Simultaneous
reflection of all arguments leaves mixed volume unchanged, hence
\begin{equation*}
V(\widehat f_1,\ldots,\widehat f_n)
=V(f_1,\ldots,f_n).
\end{equation*}
If exactly $s$ functions are odd, multilinearity makes the~left-hand
side equal to $(-1)^sV(f_1,\ldots,f_n)$.  For odd $s$, the~mixed
volume equals its negative.
\end{proof}

\subsection{Regularity and approximation}

\noindent
The~differential expressions in
\eqref{eq:Q-def}--\eqref{eq:mixed-volume-integral} and \linebreak the~parametrized Wigner hedgehog
require the~stated smoothness.  The~convex-geometric defects,
projection identities, and inequalities do not.  Indeed, support
functions identify Hausdorff convergence of convex bodies with
uniform convergence on the~sphere, and mixed volumes are continuous
under this convergence; see \cite[Sections~1.8 and~5.1]{Schneider}.
Convolution with a~smooth rotational approximate identity gives smooth
support functions and commutes with the~antipodal map, so the~even and
odd parts converge separately.  Adding a~vanishing Euclidean-ball
summand makes the~approximants $C^2_+$.

If $K$ has constant width $2r$, rotational convolution preserves the~identity $h(u)+h(-u)=2r$.  A~subsequent Minkowski interpolation with
$rB^n$ preserves the~same width and yields positive curvature.  Thus
every constant-width body can be approximated by $C^\infty$ bodies of
class $C^2_+$ and the~same width.  Consequently, all statements below
that involve only $W_j$, $\mathcal A_k$, projections, or mixed volumes
extend by continuity to arbitrary convex bodies.  Pointwise
differential or algebraic-volume interpretations are asserted only in
the~regularity class specified in their statements.

\section{Symmetrization defects and their exact expansion}

\noindent
For $t\in[-1,1]$, consider the~Minkowski symmetrization path
\begin{equation}
K_t
=
\frac{1+t}{2}K+\frac{1-t}{2}(-K).
\label{eq:symmetrization-path}
\end{equation}
Its support function is
\begin{equation}
h_t=\heven+t\hodd.
\label{eq:path-support}
\end{equation}
Thus $K_1=K$, $K_{-1}=-K$, and $K_0=C$.

\begin{definition}
\label{def:symmetrization-defect}
For $k=1,\ldots,n$, the~$k$-dimensional symmetrization defect is
\begin{equation}
\mathcal A_k(K)
=
W_{n-k}(C)-W_{n-k}(K).
\label{eq:defect-definition}
\end{equation}
We also put $\mathcal A_0(K)=0$.
\end{definition}

Notice that $\mathcal A_1=0$, because $W_{n-1}$ is Minkowski linear
and reflection invariant.  The~volume defect is $\mathcal A_n$, while
the~surface-area defect is $n\mathcal A_{n-1}$.

\begin{theorem}[Exact Wigner expansion]
\label{thm:exact-expansion}
Let $K\subset\RR^n$ be a~convex body, let $C$ be its central
symmetral, and write $h=\hodd+\heven$ as in
\eqref{eq:even-odd-decomposition}.  For $k=1,\ldots,n$,
\begin{align}
W_{n-k}(K_t)
&=
W_{n-k}(C)
+
\sum_{r=1}^{\lfloor k/2\rfloor}
\binom{k}{2r}t^{2r}
V\left(
\heven^{[k-2r]},\hodd^{[2r]},1^{[n-k]}
\right).
\label{eq:defect-path-expansion}
\end{align}
Consequently,
\begin{equation}
\mathcal A_k(K)
=
-
\sum_{r=1}^{\lfloor k/2\rfloor}
\binom{k}{2r}
V\left(
\heven^{[k-2r]},\hodd^{[2r]},1^{[n-k]}
\right).
\label{eq:defect-mixed-expansion}
\end{equation}
\end{theorem}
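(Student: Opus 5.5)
We prove this by multilinear expansion of the mixed-volume form together with the parity lemma. By \eqref{eq:quermass-normalization}, $W_{n-k}(K_t)=V(K_t^{[k]},B^{[n-k]})$. By \eqref{eq:path-support}, the support function of $K_t$ is $h_t=\heven+t\hodd$. First assume $K$ is of class $C^2_+$. Then $\heven$ and $\hodd$ are $C^2$ formal differences of support functions; for instance $\hodd=h-\heven$, and $\heven$ is the support function of $C$ by Proposition~\ref{prop:geometry-decomposition}. Hence the multilinear extension \eqref{eq:mixed-volume-integral} applies. Since this form is symmetric and multilinear in its arguments, expanding the first $k$ slots gives
\begin{equation*}
W_{n-k}(K_t)=\sum_{j=0}^{k}\binom{k}{j}t^{j}\,V\left(\heven^{[k-j]},\hodd^{[j]},1^{[n-k]}\right).
\end{equation*}
It is important that \eqref{eq:mixed-volume-integral} agrees with the genuine mixed volume on convex support functions. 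This is what lets us identify the left side with the quermassintegral of $K_t$.

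Next we apply Lemma~\ref{lem:parity}. The constant function $1$ and $\heven$ are even, and $\hodd$ is odd. So every term with odd $j$ contains an odd number of odd arguments and vanishes. The $j=0$ term is $V(\heven^{[k]},1^{[n-k]})=W_{n-k}(C)$. Writing $j=2r$ with $1\le r\le\lfloor k/2\rfloor$ gives \eqref{eq:defect-path-expansion}. Setting $t=1$, where $K_1=K$, and rearranging gives \eqref{eq:defect-mixed-expansion}, using Definition~\ref{def:symmetrization-defect}.

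For a general convex body we use the approximation scheme of the regularity subsection. Rotational convolution followed by adding a small ball produces $C^2_+$ bodies $K^{(m)}\to K$ in the Hausdorff metric. These approximants have even and odd parts converging uniformly to $\heven$ and $\hodd$. The left side of \eqref{eq:defect-path-expansion} passes to the limit by continuity of mixed volumes, since $K^{(m)}_t\to K_t$ and $C^{(m)}\to C$. On the right side, the mixed-volume terms involving $\hodd$ must be read as mixed volumes of formal differences. Write $\hodd=h-\heven$, a difference of the support functions of $K$ and $C$. Each term is then a finite combination of genuine mixed volumes of $K$, $C$ and $B^n$, and each of these converges. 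So the identity extends, and it defines the terms for nonsmooth $K$ consistently.

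The main obstacle is the justification, not the algebra: the multilinear, symmetric, reflection-invariant extension must be well defined on formal differences. That extension is supplied by \eqref{eq:mixed-volume-integral} and Lemma~\ref{lem:parity}. Once it is granted, the argument is a binomial expansion followed by parity cancellation.
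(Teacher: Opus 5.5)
Your proof is correct and follows the paper's argument: expand $V(h_t^{[k]},1^{[n-k]})$ with $h_t=\heven+t\hodd$ by multilinearity, remove the odd powers of $\hodd$ with Lemma~\ref{lem:parity}, identify the zeroth term as $W_{n-k}(C)$, and set $t=1$. Your extra approximation step for nonsmooth $K$ is harmless; the paper places it in its regularity subsection. It is not even needed, because Minkowski additivity of mixed volumes already makes the multilinear extension to differences of arbitrary support functions well defined without smoothing.
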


\begin{proof}
By \eqref{eq:quermass-normalization}, \eqref{eq:path-support}, and
multilinearity,
\begin{equation*}
W_{n-k}(K_t)
=
\sum_{s=0}^k
\binom{k}{s}t^s
V\left(\heven^{[k-s]},\hodd^{[s]},1^{[n-k]}\right).
\end{equation*}
Lemma~\ref{lem:parity} removes every odd value of $s$.  The~term
$s=0$ is $W_{n-k}(C)$.  Setting $t=1$ and using
\eqref{eq:defect-definition} proves \eqref{eq:defect-mixed-expansion}.
\end{proof}

Formula \eqref{eq:defect-path-expansion} gives a~variational meaning
to every mixed term: it is
an~even response of a~quermassintegral when one moves from the~central
symmetral in the~Wigner direction $\hodd$.

\begin{theorem}[Positivity, rigidity, and the~quadratic response]
\label{thm:positive-defects}
Let $K\subset\RR^n$ be a~convex body and $C$ its central symmetral.
For $k=2,\ldots,n$,
\begin{equation}
\mathcal A_k(K)\geqslant0.
\label{eq:defect-positive}
\end{equation}
Equality holds if and only if $K$ is centrally symmetric up to
translation.  Moreover,
\begin{equation}
V\left(\heven^{[k-2]},\hodd^{[2]},1^{[n-k]}\right)\leqslant0.
\label{eq:quadratic-response-sign}
\end{equation}
\end{theorem}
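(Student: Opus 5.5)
The plan is to use the symmetrization path $K_t$ together with concavity coming from Brunn--Minkowski. Fix $k\in\{2,\ldots,n\}$ and set
\[
\phi(t)=W_{n-k}(K_t)^{1/k}.
\]
The function $K\mapsto W_{n-k}(K)=V(K^{[k]},B^{[n-k]})$ satisfies the Brunn--Minkowski inequality for quermassintegrals (a consequence of Alexandrov--Fenchel; see \cite[Section~7.4]{Schneider}). Since $t\mapsto K_t$ is Minkowski affine, $\phi$ is concave on $[-1,1]$. Reflection invariance gives $W_{n-k}(-K)=W_{n-k}(K)$, and Theorem~\ref{thm:exact-expansion} shows $\phi$ is even. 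A concave even function attains its maximum at $0$, so
\[
W_{n-k}(C)=\phi(0)^k\geqslant\phi(1)^k=W_{n-k}(K).
\]
This is \eqref{eq:defect-positive}.

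For the quadratic response, assume first that $K$ is $C^2_+$, so that $\phi$ is smooth near $0$. Since $\phi$ is even, $\phi'(0)=0$, and concavity forces $\phi''(0)\leqslant 0$. By \eqref{eq:defect-path-expansion},
\[
\phi(t)^k=W_{n-k}(C)+\binom{k}{2}t^2 V\bigl(\heven^{[k-2]},\hodd^{[2]},1^{[n-k]}\bigr)+O(t^4).
\]
Differentiating twice at $0$ gives
\[
k\,\phi(0)^{k-1}\phi''(0)=k(k-1)\,V\bigl(\heven^{[k-2]},\hodd^{[2]},1^{[n-k]}\bigr).
\]
Since $\phi(0)>0$ ($C$ has interior), this yields \eqref{eq:quadratic-response-sign}. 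The general case follows from the approximation paragraph, because the even and odd parts converge separately and mixed volumes are continuous. Equivalently, one can read off the quadratic response from the Alexandrov--Fenchel inequality
\[
V(\heven,\hodd,\ldots)^2\geqslant V(\heven,\heven,\ldots)\,V(\hodd,\hodd,\ldots),
\]
applied with the virtual body $\hodd$. The cross term vanishes by Lemma~\ref{lem:parity}, and $V(\heven^{[k]},1^{[n-k]})>0$, so $V(\hodd^{[2]},\ldots)\leqslant 0$. I would present the concavity route and mention this as a check.

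The rigidity statement is the main obstacle. If $K$ is centrally symmetric up to translation, then $\hodd$ is linear, every term in \eqref{eq:defect-mixed-expansion} vanishes by \eqref{eq:linear-kernel}, and $\mathcal A_k=0$. Conversely, suppose $\mathcal A_k(K)=0$. Then the concave function $\phi$ satisfies $\phi(1)=\phi(0)=\phi(-1)$, so it is constant on $[-1,1]$. In particular $W_{n-k}(\tfrac12K+\tfrac12(-K))^{1/k}$ equals the average of $W_{n-k}(K)^{1/k}$ and $W_{n-k}(-K)^{1/k}$, which is the equality case of the Brunn--Minkowski inequality for $W_{n-k}$. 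For $k=n$ this is classical: $K$ and $-K$ are homothetic, and since their volumes are equal they are translates, so $K$ is centrally symmetric about a point. For $2\leqslant k<n$, the equality case of the quermassintegral Brunn--Minkowski inequality is also known to force homothety when the bodies have interior points; this is the $W_i$, $i\leqslant n-2$, case in \cite[Theorem~7.4.6]{Schneider}, and for $i=n-1$ ($k=1$) linearity excludes it. I would cite that result, check that the range $k\geqslant2$ matches its hypotheses, and conclude as in the case $k=n$. If a citation for all $k$ were unavailable, the fallback is to use Kubota's formula. Then $\mathcal A_k$ averages the volume defects of $k$-dimensional projections, and vanishing forces almost every projection $K\restr E$ to be centrally symmetric. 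By continuity every projection is then symmetric, and for $k\geqslant2$ this forces $K$ itself to be centrally symmetric, since $\hodd$ is linear on each $E\cap\SSph^{n-1}$ and hence globally linear.
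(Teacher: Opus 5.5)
Your proof is correct and follows the paper's route. Positivity comes from Brunn--Minkowski for $W_{n-k}$ applied to $K$ and $-K$ with $s=1/2$; rigidity comes from the cited equality case, which makes $K$ and $-K$ homothetic and hence translates; and the sign of the quadratic coefficient comes from concavity and evenness of $t\mapsto W_{n-k}(K_t)^{1/k}$ at $t=0$. Your Kubota fallback for rigidity is also sound, and is in fact more elementary than the paper's citation: it needs only the equality case of volume Brunn--Minkowski in each $E$, plus the fact that an odd function which is linear on every $k$-plane ($k\geqslant 2$) is globally linear.
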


\begin{proof}
The~Brunn--Minkowski inequality for quermassintegrals states that
\begin{equation}
W_{n-k}((1-s)L+sM)^{1/k}
\geqslant
(1-s)W_{n-k}(L)^{1/k}
+sW_{n-k}(M)^{1/k};
\label{eq:BM-quermass}
\end{equation}
see \cite[Theorem~7.4.5]{Schneider} for the~inequality and
\cite[Theorems~7.4.6 and~7.6.9]{Schneider} for the~equality case.
Apply \eqref{eq:BM-quermass} to $L=K$,
$M=-K$, and $s=1/2$.  Reflection invariance gives
$W_{n-k}(-K)=W_{n-k}(K)$, and therefore
\eqref{eq:defect-positive}.

For $k\geqslant2$, equality in \eqref{eq:BM-quermass} holds precisely when
$K$ and $-K$
are homothetic.  They have the~same size, so the~homothety ratio is
one and they differ by a~translation.  This is equivalent to central
symmetry up to translation.

Finally, $t\mapsto W_{n-k}(K_t)^{1/k}$ is concave by
\eqref{eq:BM-quermass} and even by
reflection invariance.  Hence its second derivative at $t=0$ is
non-positive.  The~first derivative of $W_{n-k}(K_t)$ vanishes at
zero, while \eqref{eq:defect-path-expansion} gives
\begin{equation*}
\left.\frac{\dd^2}{\dd t^2}\right|_{t=0}
W_{n-k}(K_t)
=
k(k-1)
V\left(\heven^{[k-2]},\hodd^{[2]},1^{[n-k]}\right).
\end{equation*}
This proves \eqref{eq:quadratic-response-sign}.
\end{proof}

\begin{remark}
The~total sum in \eqref{eq:defect-mixed-expansion} has a~prescribed
sign, but the~higher individual
terms generally do not.  This point is already essential in dimension
four, where a~quartic Wigner term occurs.
\end{remark}

\section{Projection formulas and Rogers--Shephard bounds}

\noindent
Let $\Gr(n,k)$ be the~Grassmannian of $k$-dimensional linear
subspaces of $\RR^n$, equipped with invariant probability measure
$\dd E$.  We write $K\restr E$ for the~orthogonal projection of $K$
onto $E$.

The~Kubota formula, with the~normalization
\eqref{eq:quermass-normalization}, reads
\cite[(5.72)]{Schneider}
\begin{equation}
W_{n-k}(K)
=
\frac{\kappa_n}{\kappa_k}
\int_{\Gr(n,k)}
\Vol_k(K\restr E)
\dd E.
\label{eq:Kubota}
\end{equation}

Orthogonal projection commutes with Minkowski addition and reflection,
so
\begin{equation*}
C\restr E
=
\frac{K\restr E+(-(K\restr E))}{2}.
\end{equation*}
Subtract \eqref{eq:Kubota} for $K$ from the~same formula for $C$ and we obtain the following proposition.

\begin{proposition}[Projection formula]
\label{prop:projection-defect}
Let $K\subset\RR^n$ be a~convex body and
$C=\frac12(K+(-K))$ its central symmetral.  For
$k=1,\ldots,n$,
\begin{equation}
\mathcal A_k(K)
=
\frac{\kappa_n}{\kappa_k}
\int_{\Gr(n,k)}
\left[
\Vol_k\left(\frac{K\restr E+(-(K\restr E))}{2}\right)
-\Vol_k(K\restr E)
\right]
\dd E.
\label{eq:projection-defect}
\end{equation}
\end{proposition}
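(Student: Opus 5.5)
The plan is to apply the Kubota formula \eqref{eq:Kubota} twice, once to $C$ and once to $K$, and subtract. By Definition~\ref{def:symmetrization-defect}, $\mathcal A_k(K)=W_{n-k}(C)-W_{n-k}(K)$. Both quantities are quermassintegrals of convex bodies with nonempty interior, so \eqref{eq:Kubota} applies to each. This gives
\begin{equation*}
\mathcal A_k(K)=\frac{\kappa_n}{\kappa_k}\int_{\Gr(n,k)}\bigl[\Vol_k(C\restr E)-\Vol_k(K\restr E)\bigr]\dd E .
\end{equation*}
Subtracting the two integrals under one integral sign is legitimate: $E\mapsto\Vol_k(L\restr E)$ is continuous and bounded on the compact Grassmannian for every convex body $L$, so both integrands are integrable.

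The remaining step is to identify $C\restr E$. Orthogonal projection $\pi_E$ onto $E$ is linear, so $\pi_E(\lambda A+\mu B)=\lambda\pi_E(A)+\mu\pi_E(B)$ for convex sets $A,B$ and scalars $\lambda,\mu$; in particular $\pi_E(-K)=-\pi_E(K)$. Hence
\begin{equation*}
C\restr E=\frac12\bigl(K\restr E+(-(K\restr E))\bigr).
\end{equation*}
One can equally check this via support functions: for $u\in E$ one has $h_{K\restr E}(u)=h_K(u)$, so the support function of $C\restr E$ is the even part of $h_{K\restr E}$ on $E$. Substituting this identity into the integrand yields \eqref{eq:projection-defect}.

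There is no real obstacle here. The only point to watch is the normalization: the constant $\kappa_n/\kappa_k$ and the probability measure on $\Gr(n,k)$ must match the convention $W_{n-k}(K)=V(K^{[k]},B^{[n-k]})$ from \eqref{eq:quermass-normalization}. Since \eqref{eq:Kubota} is quoted in exactly that normalization, the same constant carries over to the difference. The case $k=1$ is consistent with the formula: in dimension one the symmetrization of a segment preserves its length, so the integrand vanishes, matching $\mathcal A_1=0$.
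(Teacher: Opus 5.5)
Your proposal is correct and follows essentially the same argument as the paper. You apply the Kubota formula \eqref{eq:Kubota} to $C$ and to $K$, subtract, and use that orthogonal projection commutes with Minkowski addition and reflection to identify $C\restr E$ with the central symmetral of $K\restr E$.
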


Thus $\mathcal A_k$ is an~averaged $k$-dimensional asymmetry deficit.
The~pointwise integrand in \eqref{eq:projection-defect} is non-negative
by the~ordinary
Brunn--Minkowski inequality in $E$.

\begin{proposition}[Two-sided universal bound]
\label{prop:RS-bound}
Let $K\subset\RR^n$ be a~convex body.  For $k=1,\ldots,n$,
\begin{equation}
0\leqslant\mathcal A_k(K)
\leqslant
\left(
2^{-k}\binom{2k}{k}-1
\right)W_{n-k}(K).
\label{eq:RS-defect}
\end{equation}
For $n=k$, the~upper constant is sharp and equality is attained by
simplices.
\end{proposition}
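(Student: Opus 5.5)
The lower bound is already available from Proposition~\ref{prop:projection-defect}: the integrand in \eqref{eq:projection-defect} is pointwise non-negative by the Brunn--Minkowski inequality in $E$. For $k\geqslant 2$ it also follows from Theorem~\ref{thm:positive-defects}, and for $k=1$ we have $\mathcal A_1=0$. For the upper bound we work projection by projection and integrate at the end.

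Fix $E\in\Gr(n,k)$ and put $L=K\restr E$, a convex body in $E\cong\RR^k$. We may assume $L$ has non-empty interior; degenerate projections form a null set and contribute zero anyway. The central symmetral of $L$ is $\frac12(L+(-L))=\frac12 DL$, where $DL=L-L$ is the difference body. The Rogers--Shephard inequality gives
\begin{equation*}
\Vol_k(DL)\leqslant\binom{2k}{k}\Vol_k(L).
\end{equation*}
Scaling by $\frac12$ therefore yields
\begin{equation*}
\Vol_k\bigl(\tfrac12(L+(-L))\bigr)-\Vol_k(L)\leqslant\Bigl(2^{-k}\binom{2k}{k}-1\Bigr)\Vol_k(K\restr E).
\end{equation*}
Integrating this over $\Gr(n,k)$ with the factor $\kappa_n/\kappa_k$, and applying \eqref{eq:projection-defect} on the left and the Kubota formula \eqref{eq:Kubota} on the right, gives \eqref{eq:RS-defect}. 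The constant is uniform in $E$, so it passes directly through the average.

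For the sharpness claim when $k=n$, the Grassmannian reduces to a single point and $\mathcal A_n(K)=\Vol_n(C)-\Vol_n(K)=2^{-n}\Vol_n(DK)-\Vol_n(K)$. Equality in Rogers--Shephard holds exactly for simplices, so a simplex attains the upper constant.

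The only point needing care is the classical input: we need the correct Rogers--Shephard constant $\binom{2k}{k}$ together with its equality case. We take both from the literature, for example \cite{Schneider}, Section~10.1. Everything else is bookkeeping with the normalizations in \eqref{eq:Kubota}.
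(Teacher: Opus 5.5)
Your proof is correct and follows the paper's argument: apply the Rogers--Shephard inequality to each projection $L=K\restr E$, whose central symmetral is $\frac12(L-L)$, and integrate with the Kubota formula; sharpness at $k=n$ comes from the simplex equality case. Your caveat about degenerate projections is unnecessary, since every orthogonal projection of a convex body has non-empty interior in $E$.
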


\begin{proof}
Let $L=K\restr E\subset E$.  The~Rogers--Shephard difference-body
inequality \cite[Theorem~1]{RogersShephard} gives
\begin{equation*}
\Vol_k(L-L)
\leqslant
\binom{2k}{k}\Vol_k(L).
\end{equation*}
For full-dimensional $L\subset E$, equality holds if and only if $L$
is a~simplex.  This includes the~one-dimensional case, in which every
interval is a~simplex.
Since $(L-L)/2$ is the~central symmetral of $L$,
\begin{equation*}
0\leqslant
\Vol_k\left(\frac{L-L}{2}\right)-\Vol_k(L)
\leqslant
\left(2^{-k}\binom{2k}{k}-1\right)\Vol_k(L).
\end{equation*}
Integrating and using \eqref{eq:Kubota} proves
\eqref{eq:RS-defect}.  When $n=k$, this is exactly the~sharp
Rogers--Shephard inequality.
\end{proof}

\begin{proposition}[Difference-body coefficients in Wigner variables]
\label{prop:Krawtchouk-transform}
Let $K\subset\RR^n$ be a~convex body, write
$h_K=\heven+\hodd$, and set
\begin{equation*}
I_{2r}
=
V\left(\heven^{[n-2r]},\hodd^{[2r]}\right),
\qquad
r=0,\ldots,\left\lfloor\frac n2\right\rfloor.
\end{equation*}
For $j=0,\ldots,n$, define
\begin{equation*}
G_j(K)=V\left((-K)^{[j]},K^{[n-j]}\right)
\end{equation*}
and the~binary Krawtchouk polynomial
\begin{equation}
\operatorname{Kr}_m(j;n)
=
\sum_{\ell=0}^{m}
(-1)^\ell
\binom{j}{\ell}
\binom{n-j}{m-\ell},
\label{eq:Krawtchouk-definition}
\end{equation}
where binomial coefficients outside their natural range are understood
to be zero.  Then
\begin{equation}
G_j(K)
=
\sum_{r=0}^{\lfloor n/2\rfloor}
\operatorname{Kr}_{2r}(j;n)I_{2r}.
\label{eq:Godbersen-Krawtchouk}
\end{equation}
Conversely,
\begin{equation}
I_{2r}
=
\frac{1}{2^n\binom{n}{2r}}
\sum_{j=0}^{n}
\binom{n}{j}\operatorname{Kr}_{2r}(j;n)G_j(K).
\label{eq:inverse-Krawtchouk}
\end{equation}
Along the~symmetrization path \eqref{eq:symmetrization-path}, the~same
volume polynomial therefore has the~two representations
\begin{align}
\Vol_n(K_t)
=
\sum_{r=0}^{\lfloor n/2\rfloor}
\binom{n}{2r}t^{2r}I_{2r}
=
2^{-n}\sum_{j=0}^{n}
\binom{n}{j}(1+t)^{n-j}(1-t)^jG_j(K).
\label{eq:path-two-bases}
\end{align}
\end{proposition}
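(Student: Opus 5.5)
We expand $G_j(K)$ by multilinearity in the even--odd variables, using the identities $h_{-K}=\heven-\hodd$ and $h_K=\heven+\hodd$. Because the mixed volume is symmetric, we may put the $j$ copies of $-K$ first and the $n-j$ copies of $K$ after them. In the product
\begin{equation*}
(\heven-\hodd)^{j}(\heven+\hodd)^{n-j}
\end{equation*}
we pick $\ell$ odd factors among the first $j$ slots, each contributing a sign $-1$, and $m-\ell$ odd factors among the last $n-j$ slots. Every resulting term equals $V(\heven^{[n-m]},\hodd^{[m]})$ by symmetry. Collecting terms gives
\begin{equation*}
G_j(K)=\sum_{m=0}^{n}\operatorname{Kr}_m(j;n)\,V\left(\heven^{[n-m]},\hodd^{[m]}\right),
\end{equation*}
with the coefficient read off directly from \eqref{eq:Krawtchouk-definition}. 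Lemma~\ref{lem:parity} kills every odd $m$, and this leaves \eqref{eq:Godbersen-Krawtchouk}. These manipulations are legitimate for arbitrary convex bodies, since $\hodd$ is a formal difference of support functions. Alternatively, one can expand the genuine volume polynomial $\Vol_n(sK+t(-K))$ and only then reorganize.

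For \eqref{eq:path-two-bases}, the first representation is the case $k=n$ of \eqref{eq:defect-path-expansion} in Theorem~\ref{thm:exact-expansion}. For the second, write $h_t=\tfrac{1+t}{2}h_K+\tfrac{1-t}{2}h_{-K}$ and expand $V(h_t^{[n]})$ binomially. Choosing $j$ slots to carry $-K$ gives
\begin{equation*}
2^{-n}\binom{n}{j}(1+t)^{n-j}(1-t)^jG_j(K).
\end{equation*}
Since both sides equal $\Vol_n(K_t)$, they agree as polynomials in $t$.

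The inversion \eqref{eq:inverse-Krawtchouk} relies on the orthogonality of binary Krawtchouk polynomials:
\begin{equation*}
\sum_{j=0}^n\binom{n}{j}\operatorname{Kr}_m(j;n)\operatorname{Kr}_{m'}(j;n)=2^n\binom{n}{m}\delta_{mm'}.
\end{equation*}
I would prove this with generating functions. The identity
\begin{equation*}
\sum_m\operatorname{Kr}_m(j;n)z^m=(1-z)^j(1+z)^{n-j}
\end{equation*}
follows directly from the definition. Multiplying by the analogous series in $w$, weighting by $\binom{n}{j}$ and summing over $j$ gives
\begin{equation*}
\bigl[(1-z)(1-w)+(1+z)(1+w)\bigr]^n=2^n(1+zw)^n,
\end{equation*}
whose coefficient of $z^mw^{m'}$ is $2^n\binom{n}{m}\delta_{mm'}$. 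Substituting \eqref{eq:Godbersen-Krawtchouk} into the right side of \eqref{eq:inverse-Krawtchouk} and applying this orthogonality with $m=2r$ recovers $I_{2r}$. Only even indices appear on both sides, so no odd terms interfere.

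The main obstacle is bookkeeping rather than anything conceptual. The points to watch are the consistency of sign conventions in $\operatorname{Kr}_m$ with the placement of $-K$, the boundary conventions for the binomials, and the orthogonality normalization.
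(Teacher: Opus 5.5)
Your proposal is correct and matches the paper's proof essentially step for step. Both expand $G_j(K)$ multilinearly in $\heven\pm\hodd$ with coefficient $\operatorname{Kr}_m(j;n)$ and use Lemma~\ref{lem:parity} to remove odd $m$. Both derive Krawtchouk orthogonality from the generating identity $\sum_j\binom{n}{j}\bigl((1-z)(1-w)\bigr)^j\bigl((1+z)(1+w)\bigr)^{n-j}=2^n(1+zw)^n$. Both obtain the two path representations from the case $k=n$ of Theorem~\ref{thm:exact-expansion} and from the direct expansion of $K_t$. The only differences are cosmetic: you state the one-variable generating function $\sum_m\operatorname{Kr}_m(j;n)z^m=(1-z)^j(1+z)^{n-j}$ explicitly, and you treat the path identity before the inversion.
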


\begin{proof}
The~support functions of $K$ and $-K$ are, respectively,
$\heven+\hodd$ and $\heven-\hodd$.  Expanding $G_j(K)$ by
multilinearity shows that the~coefficient of
$V(\heven^{[n-m]},\hodd^{[m]})$ is exactly
$\operatorname{Kr}_m(j;n)$.  Lemma~\ref{lem:parity} removes the~odd
values of $m$ and proves \eqref{eq:Godbersen-Krawtchouk}.  The~standard
orthogonality relation
\begin{equation*}
\sum_{j=0}^{n}\binom{n}{j}
\operatorname{Kr}_m(j;n)\operatorname{Kr}_s(j;n)
=2^n\binom{n}{m}\,\delta_{ms}
\end{equation*}
follows by extracting the~coefficient of $z^mw^s$ from
\begin{equation*}
\sum_{j=0}^{n}\binom{n}{j}
\bigl((1-z)(1-w)\bigr)^j
\bigl((1+z)(1+w)\bigr)^{n-j}
=2^n(1+zw)^n.
\end{equation*}
It gives \eqref{eq:inverse-Krawtchouk}.  Finally, the~first equation of
\eqref{eq:path-two-bases} is the~case $k=n$ of
\eqref{eq:defect-path-expansion}, while the~second follows by expanding
$K_t=\frac{1+t}{2}K+\frac{1-t}{2}(-K)$ directly.
\end{proof}

Thus the~Wigner expansion and the~mixed difference-body expansion are
the~power-basis and Bernstein-basis descriptions of one polynomial.
The~termwise estimates
\begin{equation}
G_j(K)\leqslant\binom{n}{j}\Vol_n(K),
\qquad j=0,\ldots,n,
\label{eq:Godbersen-inequalities}
\end{equation}
are the~Godbersen inequalities.  Their higher-order difference-body
extension and its proof for anti-blocking bodies are developed in
\cite{Kotrbaty2025}; the~full family \eqref{eq:Godbersen-inequalities}
is proved in the~recent preprint
\cite[Theorem~1.1]{KotrbatyMouamine2026}.  Summing
\eqref{eq:Godbersen-inequalities} with weights $\binom{n}{j}$ gives
\begin{equation*}
\Vol_n(K-K)=2^nI_0
\leqslant
\sum_{j=0}^{n}\binom{n}{j}^2\Vol_n(K)
=\binom{2n}{n}\Vol_n(K),
\end{equation*}
so Proposition~\ref{prop:RS-bound} is their aggregated
Rogers--Shephard consequence when $k=n$.  Applying the~same argument
to every $k$-dimensional projection and integrating gives the~full
bound \eqref{eq:RS-defect}.

\subsection{Transfer of lower volume bounds through projections}

\noindent
The~projection formula also transfers any dimension-specific lower
volume bound for constant-width bodies to all higher dimensions.

\begin{theorem}[Projection-transfer principle]
\label{thm:projection-transfer}
Fix $2\leqslant k\leqslant n$ and suppose that every $k$-dimensional convex body
$L$ of constant width $2r$ satisfies
\begin{equation}
\Vol_k(L)\geqslant\mu_k\kappa_k r^k
\label{eq:lower-volume-ratio}
\end{equation}
for some constant $0<\mu_k\leqslant1$.  If
$K\subset\RR^n$ has constant width $2r$, then
\begin{equation}
\mathcal A_k(K)\leqslant
\kappa_n(1-\mu_k)r^k.
\label{eq:projection-transfer}
\end{equation}
\end{theorem}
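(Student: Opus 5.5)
The plan is to use the projection formula, Proposition~\ref{prop:projection-defect}, and to bound the integrand pointwise on $\Gr(n,k)$. There are three steps.

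\emph{Step 1: projections inherit constant width.} Let $K$ have constant width $2r$, so $\heven\equiv r$ by \eqref{eq:constant-width-even-part}. For any $E\in\Gr(n,k)$ and any unit vector $u\in E$, the support function of $K\restr E$ in direction $u$ equals $h_K(u)$. Hence $h_{K\restr E}(u)+h_{K\restr E}(-u)=2r$ on the unit sphere of $E$. So $K\restr E$ is a $k$-dimensional body of constant width $2r$. It is full-dimensional in $E$, because it contains a translate of a ball of radius $r$, or simply because its width is positive in every direction.

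\emph{Step 2: identify the central symmetral of each projection.} The central symmetral of $K\restr E$ has support function $r$ on $E$, so
\begin{equation*}
\frac{K\restr E+(-(K\restr E))}{2}=rB^n\cap E.
\end{equation*}
Its $k$-volume is $\kappa_k r^k$. The integrand in \eqref{eq:projection-defect} is therefore
\begin{equation*}
\kappa_k r^k-\Vol_k(K\restr E).
\end{equation*}
By hypothesis \eqref{eq:lower-volume-ratio}, applied with $L=K\restr E$ (after identifying $E$ with $\RR^k$ by an isometry), this is at most $(1-\mu_k)\kappa_k r^k$.

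\emph{Step 3: integrate.} Integrate against the probability measure $\dd E$ and multiply by $\kappa_n/\kappa_k$. This gives $\mathcal A_k(K)\leqslant \kappa_n(1-\mu_k)r^k$, which is \eqref{eq:projection-transfer}.

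As a consistency check, one can instead compute $W_{n-k}(C)=W_{n-k}(rB^n)=\kappa_n r^k$ directly. This agrees with integrating $\kappa_k r^k$ in \eqref{eq:Kubota}.

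No step is a real obstacle. The only point needing care is Step~1: one must verify that restricting the support function to $E$ gives the support function of the projection, i.e. $h_{K\restr E}=h_K|_{E}$. This holds because $\langle x,u\rangle=\langle x\restr E,u\rangle$ for $u\in E$. This identity is what allows the $k$-dimensional hypothesis to be applied to every projection. No regularity of $K$ is needed, since the argument uses only volumes of convex bodies and the Kubota formula.
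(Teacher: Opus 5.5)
Your proof is correct and is the paper's argument: each projection $K\restr E$ has constant width $2r$, its central symmetral is $rB_E$, and the resulting pointwise bound $\kappa_k r^k-\Vol_k(K\restr E)\leqslant(1-\mu_k)\kappa_k r^k$ is integrated in \eqref{eq:projection-defect} against the probability measure on $\Gr(n,k)$. One small slip: a body of constant width $2r$ need not contain a ball of radius $r$ (the Reuleaux triangle of width $2r$ has inradius $2r(1-1/\sqrt3)<r$), so drop that parenthetical and rely only on your second justification, positive width in every direction, for full-dimensionality of $K\restr E$.
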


\begin{proof}
For every $E\in\Gr(n,k)$, the~projection $K\restr E$ has constant
width $2r$ in $E$, while its central symmetral is $rB_E$.  Hence
\eqref{eq:lower-volume-ratio} gives
\begin{equation*}
\Vol_k(rB_E)-\Vol_k(K\restr E)
\leqslant(1-\mu_k)\kappa_k r^k.
\end{equation*}
Insert this pointwise estimate into
\eqref{eq:projection-defect}.  The~invariant measure on the~Grassmannian has total mass one.
\end{proof}

For $k=2$, the~Blaschke--Lebesgue theorem gives
$\mu_2=2(\uppi-\sqrt3)/\uppi$; see
\cite[Theorem~1]{BogoselMixed}.  Nishioka's recent three-dimensional
bound \cite[Theorem~1]{Nishioka2026}, written at width $2r$, gives
$\mu_3=8/11$.  Thus every constant-width body in $\RR^n$ satisfies
\begin{align}
\mathcal A_2(K)
&\leqslant\kappa_n\left(\frac{2\sqrt3}{\uppi}-1\right)r^2,
\label{eq:projection-transfer-planar}\\
\mathcal A_3(K)
&\leqslant\frac3{11}\kappa_n r^3.
\label{eq:projection-transfer-spatial}
\end{align}
The~second estimate will materially sharpen the~four-dimensional
outer bound in Section~\ref{sec:dimension-four}.

\subsection{The~projected Wigner-area formula}

\noindent
Assume temporarily that $K$ is of class $C^2_+$.  For
$E\in\Gr(n,2)$, let $h_{\mathrm{odd},E}$ be the~restriction of
$\hodd$ to the~unit circle of $E$.  The~Wigner caustic of
$\partial(K\restr E)$ is parametrized by $x_{h_{\mathrm{odd},E}}$.
Its full-cover algebraic area is
\begin{equation}
a(h_{\mathrm{odd},E})
=
\frac12\int_0^{2\uppi}
\left(h_{\mathrm{odd},E}^2-(h_{\mathrm{odd},E}')^2\right)\dd\theta.
\label{eq:full-cover-area}
\end{equation}
Because $x_{h_{\mathrm{odd},E}}$ is $\uppi$-periodic, the~oriented area
counted once is
\begin{equation}
A^*_{\Wig}(K\restr E)=\frac12a(h_{\mathrm{odd},E}).
\label{eq:projectively-reduced-area}
\end{equation}
The~function $h_{\mathrm{odd},E}$ is $\uppi$-antiperiodic.
Wirtinger's inequality therefore implies
$a(h_{\mathrm{odd},E})\leqslant0$.  The~first harmonic is the~harmless
translation mode.

\begin{theorem}[Quadratic Wigner projection formula]
\label{thm:Wigner-projection}
Let $K\subset\RR^n$ be a~convex body of class $C^2_+$ and let $C$ be
its central symmetral.  Then
\begin{equation}
\mathcal A_2(K)
=
\frac{2\kappa_n}{\uppi}
\int_{\Gr(n,2)}
\left|A^*_{\Wig}(K\restr E)\right|
\dd E.
\label{eq:Wigner-projection}
\end{equation}
In particular,
\begin{equation}
0\leqslant\mathcal A_2(K)\leqslant\frac12W_{n-2}(K).
\label{eq:A2-RS}
\end{equation}
\end{theorem}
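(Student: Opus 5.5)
The plan is to reduce the statement to a planar computation through Proposition~\ref{prop:projection-defect} with $k=2$:
\begin{equation*}
\mathcal A_2(K)=\frac{\kappa_n}{\uppi}\int_{\Gr(n,2)}\bigl[\Area(C\restr E)-\Area(K\restr E)\bigr]\dd E,
\end{equation*}
since $\kappa_2=\uppi$ and $C\restr E$ is the central symmetral of $K\restr E$. It then suffices to show, for every planar $C^2_+$ body $L=K\restr E$ with odd part $g=h_{\mathrm{odd},E}$, that
\begin{equation*}
\Area\Bigl(\tfrac12(L-L)\Bigr)-\Area(L)=2\bigl|A^*_{\Wig}(L)\bigr|.
\end{equation*}
The projection of a $C^2_+$ body is again $C^2_+$ in $E$, and its support function is the restriction of $h$ to the unit circle of $E$. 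Consequently the even and odd parts of $h_L$ are the restrictions of $\heven$ and $\hodd$.

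For the planar identity, apply Theorem~\ref{thm:exact-expansion} with $n=k=2$:
\begin{equation*}
\Area(C_L)-\Area(L)=-V(g,g).
\end{equation*}
By \eqref{eq:mixed-volume-integral} in dimension two, $Q_g=g''+g$ and $D_1$ is the identity. Integrating by parts therefore gives
\begin{equation*}
V(g,g)=\frac12\int_0^{2\uppi}g(g''+g)\dd\theta=\frac12\int_0^{2\uppi}\bigl(g^2-(g')^2\bigr)\dd\theta=a(g).
\end{equation*}
By \eqref{eq:projectively-reduced-area}, $a(g)=2A^*_{\Wig}(L)$, so the area defect equals $-2A^*_{\Wig}(L)$. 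The Wirtinger remark before the theorem shows $a(g)\leqslant0$: $g$ is $\uppi$-antiperiodic, so only odd Fourier modes occur, each contributing $(1-m^2)\leqslant0$. Alternatively, $a(g)\leqslant0$ follows from the positivity of the integrand in Proposition~\ref{prop:projection-defect}. Either way the area defect equals $2|A^*_{\Wig}(L)|$. Multiplying by $\kappa_n/\uppi$ and integrating yields \eqref{eq:Wigner-projection}.

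The bound \eqref{eq:A2-RS} is Proposition~\ref{prop:RS-bound} with $k=2$, because $2^{-2}\binom42-1=\tfrac12$; the lower bound is Theorem~\ref{thm:positive-defects}.

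The only delicate point is bookkeeping of the factor $2$ between the full-cover area $a$ and the projectively reduced $A^*_{\Wig}$, together with the normalization $\kappa_n/\kappa_2$. I would check this on an example, such as a Reuleaux-type or explicit trigonometric $h$ with a cubic mode $\cos3\theta$. Measurability of $E\mapsto A^*_{\Wig}(K\restr E)$ is immediate from continuity of restriction.
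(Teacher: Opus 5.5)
Your argument is correct. It uses the same two ingredients as the paper, namely Kubota's formula and the planar identity $V(g,g)=a(g)\leqslant0$, but applies them in the opposite order.

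The paper first applies Theorem~\ref{thm:exact-expansion} in $\RR^n$ to get $\mathcal A_2(K)=-V(\hodd,\hodd,1^{[n-2]})$. It then extends Kubota's formula by polarization to the virtual argument $\hodd$, which gives \eqref{eq:A2-projected-mixed-volume}.

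You instead apply Kubota only to the genuine convex bodies $K$ and $C$, through Proposition~\ref{prop:projection-defect}. You then do the even--odd expansion inside each plane. There it is the case $n=k=2$ of Theorem~\ref{thm:exact-expansion} applied to $L=K\restr E$, whose even and odd parts are indeed the restrictions of $\heven$ and $\hodd$.

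Your ordering never needs the polarization of Kubota's formula to formal differences of support functions. It also gives the sign $a(g)\leqslant0$ from planar Brunn--Minkowski, in addition to Wirtinger. The paper's ordering instead records the identity directly at the level of the mixed volume $V(\hodd,\hodd,1^{[n-2]})$ of the virtual body.

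Your bookkeeping of $\kappa_2=\uppi$, of the factor $2$ between $a$ and $A^*_{\Wig}$, and of $2^{-2}\binom42-1=\tfrac12$ is right, so no numerical check on an example is required.
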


\begin{proof}
Formula \eqref{eq:defect-mixed-expansion} with $k=2$ gives
\begin{equation*}
\mathcal A_2(K)=-V(\hodd,\hodd,1^{[n-2]}).
\end{equation*}
Kubota's formula in the~precise normalization
\eqref{eq:Kubota} -- that is, \cite[(5.72)]{Schneider} -- is polynomial
under Minkowski addition and hence extends by polarization from convex
support functions to virtual ones.  Therefore
\begin{equation}
V(\hodd,\hodd,1^{[n-2]})
=
\frac{\kappa_n}{\uppi}
\int_{\Gr(n,2)}a(h_{\mathrm{odd},E})\dd E.
\label{eq:A2-projected-mixed-volume}
\end{equation}
Now use \eqref{eq:projectively-reduced-area} and
$a(h_{\mathrm{odd},E})\leqslant0$.
Inequality \eqref{eq:A2-RS} is the~case $k=2$ of
Proposition~\ref{prop:RS-bound}.  Equivalently, it follows by
applying the~sharp planar estimate
$|A^*_{\Wig}(L)|\leqslant\Area(L)/4$ inside
\eqref{eq:Wigner-projection}.  This planar estimate is recorded and
derived from Rogers--Shephard in Corollary \ref{cor:planar-wigner-area-bound} below.
\end{proof}

The~extension of \eqref{eq:Wigner-projection} to non-smooth planar
projections can be phrased
using Schneider's middle hedgehog \cite{SchneiderMiddle}.  The~mixed
volume and projection formulas themselves remain valid without
smoothness.

\section{Strengthened isoperimetric inequalities from the~central symmetral}

\noindent
The~Alexandrov--Fenchel inequality
\cite[Theorem~7.3.1]{Schneider}, iterated along the~quermassintegrals,
gives the~generalized Urysohn inequalities: for every convex body
$L\subset\RR^n$ and $k=1,\ldots,n$,
\begin{equation}
W_{n-1}(L)^k
\geqslant
\kappa_n^{\,k-1}W_{n-k}(L).
\label{eq:generalized-Urysohn}
\end{equation}
For $k\geqslant2$, equality holds if and only if $L$ is a~Euclidean ball.
The~equality characterization follows from
\cite[Theorems~7.4.6 and~7.6.9]{Schneider}.

\begin{theorem}[Central-symmetral family of strengthened isoperimetric inequalities]
\label{thm:isoperimetric-family}
Let $K\subset\RR^n$ be a~convex body and let $C$ be its central
symmetral.  For every $k=2,\ldots,n$,
\begin{equation}
W_{n-1}(K)^k
-\kappa_n^{\,k-1}W_{n-k}(K)
\geqslant
\kappa_n^{\,k-1}\mathcal A_k(K).
\label{eq:isoperimetric-deficit}
\end{equation}
Equality holds if and only if $K$ has constant width.
\end{theorem}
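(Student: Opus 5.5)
The plan is to apply the generalized Urysohn inequality \eqref{eq:generalized-Urysohn} to the central symmetral $L=C$ rather than to $K$. The proof then has two steps: an identity for the mean-width term, and a rearrangement using the definition of $\mathcal A_k$.

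First, we check that $W_{n-1}$ is unchanged by passing from $K$ to $C$. The functional $W_{n-1}$ is Minkowski linear and reflection invariant, as was already noted when observing $\mathcal A_1=0$. Hence
\begin{equation*}
W_{n-1}(C)=\tfrac12W_{n-1}(K)+\tfrac12W_{n-1}(-K)=W_{n-1}(K).
\end{equation*}
Equivalently, by \eqref{eq:mixed-volume-integral}, $W_{n-1}(K)=\frac1n\int_{\SSph^{n-1}}h\dd\sigma$, and the odd part $\hodd$ integrates to zero.

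Next, \eqref{eq:generalized-Urysohn} for $C$ gives $W_{n-1}(K)^k=W_{n-1}(C)^k\geqslant\kappa_n^{k-1}W_{n-k}(C)$. By Definition~\ref{def:symmetrization-defect}, $W_{n-k}(C)=W_{n-k}(K)+\mathcal A_k(K)$. Substituting and moving $\kappa_n^{k-1}W_{n-k}(K)$ to the left yields \eqref{eq:isoperimetric-deficit}. By the approximation remarks in Section~2, the statement involves only quermassintegrals, so no smoothness is needed and the argument works for every convex body directly.

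For the equality case: since the rearrangement is an identity, equality in \eqref{eq:isoperimetric-deficit} holds exactly when equality holds in \eqref{eq:generalized-Urysohn} for $L=C$. For $k\geqslant2$ this means $C$ is a Euclidean ball, and since $C$ is origin-symmetric it must be $C=rB^n$ for some $r>0$. By \eqref{eq:constant-width-even-part}, this is equivalent to $K$ having constant width $2r$. Conversely, if $K$ has constant width then $C=rB^n$ and equality holds trivially.

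The only delicate point is this equality characterization of the generalized Urysohn inequality for $k\geqslant2$. It must hold for arbitrary convex bodies, including lower-dimensional ones. Here $C$ is full-dimensional whenever $K$ is, and we cite \cite[Theorems~7.4.6 and~7.6.9]{Schneider} as the paper already does. Beyond that citation there is no real obstacle, and the strengthening is exactly the positivity of $\mathcal A_k$ from Theorem~\ref{thm:positive-defects}.
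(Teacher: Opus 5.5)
Your proposal is correct and follows the paper's proof: you apply the generalized Urysohn inequality to $C$, use $W_{n-1}(C)=W_{n-1}(K)$ and $W_{n-k}(C)=W_{n-k}(K)+\mathcal A_k(K)$, and reduce the equality case to $C$ being a ball. The latter is equivalent to $\heven$ being constant, i.e.\ to $K$ having constant width.
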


\begin{proof}
Minkowski linearity and reflection invariance give
\begin{equation*}
W_{n-1}(C)=W_{n-1}(K).
\end{equation*}
Apply \eqref{eq:generalized-Urysohn} to $C$ and use
\begin{equation*}
W_{n-k}(C)=W_{n-k}(K)+\mathcal A_k(K).
\end{equation*}
Equality holds precisely when $C$ is a~ball.  By
\eqref{eq:constant-width-even-part}, this is
equivalent to constant width of $K$.
\end{proof}

Theorem~\ref{thm:isoperimetric-family} is a~genuine strengthening of
\eqref{eq:generalized-Urysohn}, because $\mathcal A_k\geqslant0$.  For
$k=2$, it takes the~especially
geometric form
\begin{equation}
W_{n-1}(K)^2
\geqslant
\kappa_n
\left(
W_{n-2}(K)
+\frac{2\kappa_n}{\uppi}
\int_{\Gr(n,2)}
\left|A^*_{\Wig}(K\restr E)\right|\dd E
\right).
\label{eq:isoperimetric-family-k2}
\end{equation}

\subsection{The~corrected Alexandrov--Fenchel chain}

\noindent
For convenience, set $\mathcal A_0=\mathcal A_1=0$.  Since
$W_j(C)=W_j(K)+\mathcal A_{n-j}(K)$,
the~entire Alexandrov--Fenchel chain for $C$ becomes the~following.

\begin{proposition}
\label{prop:corrected-AF}
Let $K\subset\RR^n$ be a~convex body and let $C$ be its central
symmetral.  For $j=0,\ldots,n-2$,
\begin{align}
\left(W_{j+1}(K)+\mathcal A_{n-j-1}(K)\right)^2
&\geqslant
\left(W_j(K)+\mathcal A_{n-j}(K)\right)\cdot
\left(W_{j+2}(K)+\mathcal A_{n-j-2}(K)\right).
\label{eq:corrected-AF}
\end{align}
Equality holds if and only if $K$ has constant width.
\end{proposition}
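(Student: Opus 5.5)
The inequality follows by applying the Alexandrov--Fenchel quermassintegral inequality $W_{j+1}(L)^2\geqslant W_j(L)W_{j+2}(L)$ to $L=C$. This is the case of \cite[Theorem~7.3.1]{Schneider} with $K_1=K_2=\dots=K_{n-j-1}=L$ and the remaining arguments $B^n$, after reordering. We then substitute $W_i(C)=W_i(K)+\mathcal A_{n-i}(K)$ for $i=j,j+1,j+2$. This identity is Definition~\ref{def:symmetrization-defect} with $k=n-i$, together with the conventions $\mathcal A_0=\mathcal A_1=0$. For $i=n$ it says $W_n(C)=\kappa_n=W_n(K)$. For $i=n-1$ it follows from Minkowski linearity and reflection invariance of $W_{n-1}$, as noted before Theorem~\ref{thm:exact-expansion}. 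The substitution turns the Alexandrov--Fenchel inequality for $C$ into \eqref{eq:corrected-AF} verbatim. No smoothness is needed, since everything is expressed through quermassintegrals of convex bodies.

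The main work is the equality case. If $K$ has constant width $2r$, then $C=rB^n$ by \eqref{eq:constant-width-even-part}. Then $W_i(C)=r^{n-i}\kappa_n$, and both sides of the inequality equal $r^{2(n-j-1)}\kappa_n^2$.

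Conversely, suppose equality holds in $W_{j+1}(C)^2=W_j(C)W_{j+2}(C)$ for some $j\le n-2$. We want to conclude that $C$ is a ball. First treat full-dimensional bodies: $C$ has nonempty interior because $K$ does. I would invoke the equality characterization for this quermassintegral inequality, namely \cite[Theorems~7.4.6 and~7.6.9]{Schneider}, which the paper already uses for \eqref{eq:generalized-Urysohn}.

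A more self-contained route is available if that citation is unclear for intermediate $j$. The sequence $i\mapsto \log W_i(C)$ is concave on $\{0,\dots,n\}$. Equality at one interior index $j+1$ propagates as follows: concavity plus equality at one step yields equality in the Urysohn-type chain through $W_{j+1}(C)^{\,n-j-1}\geqslant\kappa_n^{\,n-j-2}W_j(C)$. I would try to obtain this by combining the two-step equality with the chain of inequalities $W_{i+1}^2\ge W_iW_{i+2}$ for $i>j$. The equality case of \eqref{eq:generalized-Urysohn} with $k=n-j\geqslant2$ then gives that $C$ is a ball.

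The delicate point, and the step I expect to be the main obstacle, is that equality in a single Alexandrov--Fenchel step does not trivially force equality in the whole chain. For $j\ge 1$ the equality case of the quermassintegral Alexandrov--Fenchel inequality is a nontrivial known result: for bodies with nonempty interior, equality forces $C$ to be a ball, or at least a $(n-j-1)$-tangential body of a ball. So I would rely on Schneider's Theorem~7.6.9 (tangential bodies) in the first instance.

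That theorem leaves a residual case: $C$ is a tangential body of a ball but not a ball. I would rule it out using central symmetry. A centrally symmetric tangential body of $rB^n$ whose support function agrees with $r$ on the extreme normal directions should be forced to equal $rB^n$. The mechanism is that its support function equals $r$ on the support of the relevant area measure of $C$, a closed set invariant under $u\mapsto-u$. From this one must deduce $h_C\equiv r$. For $j=0$ the ordinary Minkowski equality case (homothety) applies directly.

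Once $C=rB^n$, relation \eqref{eq:constant-width-even-part} gives that $K$ has constant width $2r$.
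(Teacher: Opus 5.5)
Your proof of the inequality is the paper's argument: apply $W_{j+1}(C)^2\geqslant W_j(C)W_{j+2}(C)$ and substitute $W_i(C)=W_i(K)+\mathcal A_{n-i}(K)$. Your check that constant width gives equality is also fine. The gap is the converse. It cannot be closed, because for $n\geqslant3$ and $j\leqslant n-3$ the converse is false.

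The step that breaks is your residual case. Knowing $h_C=r$ on the antipodally symmetric set $\operatorname{supp}S_m(C,\cdot)$ says nothing about $h_C$ off that set, and that set can be a proper subset of the sphere. Take $K=\operatorname{conv}(B^n\cup\{\pm ae_n\})$ with $a>1$. It is centrally symmetric, so $C=K$ and every $\mathcal A_k(K)=0$. Its support function is $h_K(u)=\max\{1,a|u_n|\}$, so $K$ is not of constant width. For $m\geqslant1$ the area measures $S_m(K,\cdot)$ vanish on the open caps $\{u_n>1/a\}$ and $\{u_n<-1/a\}$. Their reverse spherical images are the vertices $\pm ae_n$, whose local parallel volume is of pure order $\varepsilon^n$. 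Off these caps $h_K=1$. Hence
\[
W_{n-m-1}(K)=\frac1n\int_{\SSph^{n-1}}h_K\dd S_m(K,\cdot)=\frac1nS_m(K,\SSph^{n-1})=W_{n-m}(K),\qquad m=1,\ldots,n-1,
\]
so $W_0(K)=\cdots=W_{n-1}(K)$, and \eqref{eq:corrected-AF} is an equality for every $j\leqslant n-3$. For example, with $n=3$ and $a=2$ one checks directly that $\Vol_3(K)=S(K)/3=M(K)/3=5\uppi/3$, so $S^2=3\Vol_3M$.

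The same example disposes of your two other suggestions.
\begin{itemize}
\item For $j=0$, the inequality $W_1^2\geqslant W_0W_2$ is Minkowski's quadratic inequality, not the first Minkowski inequality, so no homothety argument is available. Its equality cases include Bol's cap bodies. In general, for bodies with interior points, equality at index $j$ holds exactly for $(n-2-j)$-tangential bodies of balls, and these are forced to be balls only when $j=n-2$.
\item Equality at one step does not propagate along the log-concave chain. Here $W_{n-1}(K)^2=W_{n-1}(K)W_{n-2}(K)>\kappa_nW_{n-2}(K)$.
\end{itemize}

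The paper's own proof is your first step plus a citation of \cite{Schneider} for the equality case, so it has the same defect. What is true is the following. For $j=n-2$ all fixed Alexandrov--Fenchel arguments are balls, so equality forces $C$ to be a ball, and equality holds iff $K$ has constant width; this is the $k=2$ case of Theorem~\ref{thm:isoperimetric-family}. For $j\leqslant n-3$, constant width is sufficient but not necessary. Equality in all $n-1$ inequalities simultaneously is again equivalent to constant width.
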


\begin{proof}
This is
$W_{j+1}(C)^2\geqslant W_j(C)W_{j+2}(C)$, followed by the~equality
characterization for the~Alexandrov--Fenchel inequality with the~unit ball among the~fixed arguments.  The~inequality is
\cite[Theorem~7.3.1]{Schneider}.  The~equality statement follows from
\cite[Theorems~7.4.6 and~7.6.9]{Schneider}.
\end{proof}

\subsection{The~classical isoperimetric inequality applied to
\texorpdfstring{$C$}{C}}

\noindent
The~volume and surface area of the~central symmetral are
\begin{align}
\Vol_n(C)
&=\Vol_n(K)+\mathcal A_n(K),
\label{eq:C-volume}\\
S(C)
&=S(K)+n\mathcal A_{n-1}(K).
\label{eq:C-surface}
\end{align}

\begin{theorem}[Symmetral surface--volume inequality]
\label{thm:symmetral-iso}
For every convex body $K\subset\RR^n$,
\begin{equation}
\left(S(K)+n\mathcal A_{n-1}(K)\right)^n
\geqslant
n^n\kappa_n
\left(\Vol_n(K)+\mathcal A_n(K)\right)^{n-1}.
\label{eq:symmetral-isoperimetric}
\end{equation}
Equality holds if and only if $K$ has constant width.
\end{theorem}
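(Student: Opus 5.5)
The plan is to apply the classical isoperimetric inequality to the central symmetral $C$ and then rewrite $S(C)$ and $\Vol_n(C)$ in terms of $K$. The argument runs parallel to Theorem~\ref{thm:isoperimetric-family}.

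First, for every convex body $L\subset\RR^n$ the isoperimetric inequality gives
\begin{equation*}
S(L)^n\geqslant n^n\kappa_n\Vol_n(L)^{n-1},
\end{equation*}
with equality if and only if $L$ is a Euclidean ball. This can be quoted directly. Alternatively, it is the case $j=0$ chain of Alexandrov--Fenchel in the Minkowski form $W_1(L)^n\geqslant\kappa_n W_0(L)^{n-1}$, combined with $nW_1=S$ from \eqref{eq:quermass-special}; see \cite[Theorem~7.3.1]{Schneider} together with the equality discussion in \cite[Theorems~7.4.6 and~7.6.9]{Schneider}. The Minkowski form does follow from $W_{j+1}^2\geqslant W_jW_{j+2}$ by the usual log-concavity telescoping of $j\mapsto W_j(L)$ with $W_n=\kappa_n$.

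Second, apply this inequality with $L=C$. Since $C$ is a convex body, with nonempty interior because it contains a translate of $\frac12K$, the inequality applies. Substituting \eqref{eq:C-volume} and \eqref{eq:C-surface}, which are just Definition~\ref{def:symmetrization-defect} with $k=n$ and $k=n-1$ together with $S=nW_1$, yields \eqref{eq:symmetral-isoperimetric} immediately.

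Third, for the equality case: equality in \eqref{eq:symmetral-isoperimetric} holds exactly when $S(C)^n=n^n\kappa_n\Vol_n(C)^{n-1}$, that is, when $C$ is a ball $rB^n$, possibly translated. But $C$ is origin-symmetric, so it is centered at the origin. By \eqref{eq:constant-width-even-part}, $h_{\mathrm{even}}\equiv r$ is equivalent to $K$ having constant width $2r$.

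The only point needing care is the equality characterization of the isoperimetric inequality for general, non-smooth convex bodies. Here I would cite the standard result, e.g.\ \cite[Theorem~7.2.1 and its equality discussion]{Schneider} via Brunn--Minkowski, rather than pass through smooth approximation, since approximation does not preserve equality cases.
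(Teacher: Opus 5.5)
Your proof is correct and is the paper's argument: apply the Euclidean isoperimetric inequality to $C$, substitute \eqref{eq:C-volume} and \eqref{eq:C-surface}, and note that equality forces $C$ to be a ball, i.e.\ $\heven\equiv r$, i.e.\ constant width. Your added details — $C$ has interior points, and a ball equal to $C$ is centred at the origin — are correct and fill small steps the paper leaves implicit.
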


\begin{proof}
Substitute \eqref{eq:C-volume} and \eqref{eq:C-surface} into the~Euclidean isoperimetric inequality
\begin{equation*}
S(C)^n\geqslant n^n\kappa_n\Vol_n(C)^{n-1}.
\end{equation*}
For this normalization and the~equality case, see
\cite[Section~7.2]{Schneider}.
Equality holds precisely when $C$ is a~ball, equivalently when $K$ has
constant width.
\end{proof}

For $n=2$, $\mathcal A_{n-1}=\mathcal A_1=0$, so
\eqref{eq:symmetral-isoperimetric} is a~genuine
one-sided strengthening.  For $n\geqslant3$, both sides are corrected, and
\eqref{eq:symmetral-isoperimetric} should be understood as an~exact
Wigner rewriting of the~isoperimetric inequality of $C$.

\section{The~universal quadratic Wigner energy}

\noindent
The~defect $\mathcal A_2$ admits a~direct analytic formula in every
dimension.

\begin{theorem}[Spectral formula and rigidity]
\label{thm:spectral-A2}
Let $K\subset\RR^n$ be a~convex body of class $C^2_+$, let $C$ be its
central symmetral, and let $\hodd$ be the~odd part of its support
function.  Then
\begin{equation}
\mathcal A_2(K)
=
\frac{1}{n(n-1)}
\int_{\SSph^{n-1}}
\left(
|\nabla \hodd|^2-(n-1)\hodd^2
\right)
\dd\sigma.
\label{eq:A2-energy}
\end{equation}
It is non-negative and vanishes if and only if $\hodd$ is the~restriction of a~linear function, equivalently if and only if $K$ is
centrally symmetric up to translation.
\end{theorem}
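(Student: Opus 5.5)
Theorem~\ref{thm:exact-expansion} with $k=2$ gives $\mathcal A_2(K)=-V(\hodd,\hodd,1^{[n-2]})$, so the task is to compute this mixed volume explicitly via \eqref{eq:mixed-volume-integral}. I place the two copies of $\hodd$ as the first two arguments:
\begin{equation*}
V(\hodd,\hodd,1^{[n-2]})=\frac1n\int_{\SSph^{n-1}}\hodd\,D_{n-1}(Q_{\hodd},\Id^{[n-2]})\dd\sigma,
\end{equation*}
using $Q_1=\Id$. For the polarized determinant normalized by $D_{n-1}(A,\ldots,A)=\det A$, expanding $\det(\Id+\varepsilon A)$ to first order gives $D_{n-1}(A,\Id,\ldots,\Id)=\tr A/(n-1)$. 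Moreover $\tr Q_f=\Delta_{\SSph^{n-1}}f+(n-1)f$. Hence
\begin{equation*}
V(\hodd,\hodd,1^{[n-2]})=\frac{1}{n(n-1)}\int_{\SSph^{n-1}}\hodd\bigl(\Delta\hodd+(n-1)\hodd\bigr)\dd\sigma .
\end{equation*}
Integrating by parts on the closed sphere, $\int\hodd\Delta\hodd=-\int|\nabla\hodd|^2$. Changing sign then yields \eqref{eq:A2-energy}. The class $C^2_+$ hypothesis ensures $\hodd\in C^2$, so every step is legitimate.

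For non-negativity and rigidity I expand $\hodd$ in spherical harmonics, $\hodd=\sum_m Y_m$, where $Y_m$ has degree $m$ and $-\Delta Y_m=m(m+n-2)Y_m$. Since $\hodd$ is odd, only odd $m$ occur. The integrand in \eqref{eq:A2-energy} becomes
\begin{equation*}
\sum_{m\ \mathrm{odd}}\bigl(m(m+n-2)-(n-1)\bigr)\|Y_m\|_{L^2}^2=\sum_{m\ \mathrm{odd}}(m-1)(m+n-1)\|Y_m\|_{L^2}^2 .
\end{equation*}
The $m=1$ coefficient vanishes, and for odd $m\geqslant3$ the coefficient $(m-1)(m+n-1)$ is strictly positive. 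Therefore $\mathcal A_2(K)\geqslant0$, with equality exactly when $\hodd=Y_1$, that is, when $\hodd$ is the restriction of a linear function. For $C^2$ functions the Parseval identity with gradients holds; one can justify it through the $L^2$ pairing of $\hodd$ with $\Delta\hodd$, which is continuous.

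It remains to translate the equality case. If $\hodd(u)=\langle a,u\rangle$, then $h_{K-a}=\heven$, which is even, so $K-a$ is origin-symmetric. Conversely, if $K-a$ is centrally symmetric about the origin, its support function is even, and the odd part of $h_K$ is $\langle a,\cdot\rangle$. This is consistent with the equality case of Theorem~\ref{thm:positive-defects}.

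The only step needing care is the normalization $D_{n-1}(A,\Id^{[n-2]})=\tr A/(n-1)$, since the constant $1/(n(n-1))$ depends on it. I would sanity-check it for $n=2$. There $D_1(A)=A$ and the formula gives $\mathcal A_2=\frac12\int(\hodd'^2-\hodd^2)\dd\theta=-a(\hodd)$. Combined with \eqref{eq:projectively-reduced-area}, this means $\mathcal A_2=2|A^*_{\Wig}(K)|$. That agrees with \eqref{eq:Wigner-projection}, where $\kappa_2/\uppi=1$, and with \eqref{eq:planar-improved}.
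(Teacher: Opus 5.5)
Your proof is correct and follows essentially the paper's route: you reduce to $-V(\hodd,\hodd,1^{[n-2]})$, use $D_{n-1}(Q,\Id^{[n-2]})=\tr Q/(n-1)$ with $\tr Q_f=\Delta f+(n-1)f$, integrate by parts, and conclude from the spectral gap for odd functions on $\SSph^{n-1}$. The only cosmetic difference is that you expand directly in odd spherical harmonics with coefficients $(m-1)(m+n-1)$, whereas the paper cites the sharp Poincar\'e inequality and its equality space; the paper records the same spectral expansion right after its proof.
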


\begin{proof}
For a~smooth function $g$, the~operator
$Q_g=\nabla^2_{\SSph^{n-1}}g+g\Id$ is the~spherical
radius-of-curvature operator defined in \eqref{eq:Q-def}.  The~polarized determinant satisfies
\begin{equation*}
D_{n-1}(Q_g,\Id^{[n-2]})
=
\frac{\tr Q_g}{n-1}
=
\frac{\Delta g+(n-1)g}{n-1}.
\end{equation*}
Using \eqref{eq:mixed-volume-integral} and integrating by parts gives,
for arbitrary smooth
$f,g$,
\begin{equation}
V(f,g,1^{[n-2]})
=
\frac{1}{n(n-1)}
\int_{\SSph^{n-1}}
\left((n-1)fg-\langle\nabla f,\nabla g\rangle\right)
\dd\sigma.
\label{eq:mixed-bilinear}
\end{equation}
Now use \eqref{eq:defect-mixed-expansion} with $k=2$.  Since $\hodd$
is odd, it has zero spherical mean.  The~sharp Poincar\'e inequality
on $\SSph^{n-1}$ is
\begin{equation}
\int_{\SSph^{n-1}}|\nabla \hodd|^2\dd\sigma
\geqslant
(n-1)\int_{\SSph^{n-1}}\hodd^2\dd\sigma.
\label{eq:Poincare-sphere}
\end{equation}
Its equality space is the~first eigenspace of
$-\Delta_{\SSph^{n-1}}$, namely the~restrictions of linear
functions.  Applying \eqref{eq:Poincare-sphere} to
\eqref{eq:A2-energy} proves non-negativity and rigidity.  A~linear
odd term is exactly a~translation of $K$.  The~sharp inequality,
sphere spectrum, and harmonic eigenspaces are recalled in
\cite[Chapter~2]{MullerSpherical}.
\end{proof}

Let
\begin{equation*}
\hodd=\sum_{\substack{\ell\geqslant1\\ \ell\text{ odd}}}
h_{\mathrm{odd},\ell}
\end{equation*}
be the~spherical-harmonic decomposition, where
\begin{equation*}
-\Delta h_{\mathrm{odd},\ell}
=
\lambda_\ell h_{\mathrm{odd},\ell},
\qquad
\lambda_\ell=\ell(\ell+n-2).
\end{equation*}
Then \eqref{eq:A2-energy} becomes
\begin{equation}
\mathcal A_2(K)
=
\frac{1}{n(n-1)}
\sum_{\substack{\ell\geqslant1\\ \ell\text{ odd}}}
\left(\lambda_\ell-(n-1)\right)
\|h_{\mathrm{odd},\ell}\|_{L^2}^2.
\label{eq:A2-spectral}
\end{equation}
The~$\ell=1$ summand is zero.  Translate $K$ so that
$h_{\mathrm{odd},1}=0$.
Since the~next admissible degree is $\ell=3$ and
\begin{equation*}
\lambda_3-(n-1)=2(n+2),
\end{equation*}
we obtain the~following sharp estimate.

\begin{corollary}[Quantitative central-asymmetry estimate]
\label{cor:stability}
Let $K\subset\RR^n$ be a~convex body of class $C^2_+$ and let
$\hodd$ be the~odd part of its support function.  After the~translation
mode has been removed,
\begin{equation}
\mathcal A_2(K)
\geqslant
\frac{2(n+2)}{n(n-1)}
\|\hodd\|_{L^2(\SSph^{n-1})}^2.
\label{eq:A2-stability}
\end{equation}
The~constant is sharp, and equality holds precisely when $\hodd$
belongs to the~degree-three spherical-harmonic eigenspace (including
$\hodd=0$).  Consequently,
\begin{align}
W_{n-1}(K)^2-\kappa_nW_{n-2}(K)
&\geqslant
\kappa_n\mathcal A_2(K)
\geqslant
\frac{2\kappa_n(n+2)}{n(n-1)}
\|\hodd\|_{L^2}^2.
\label{eq:combined-stability}
\end{align}
\end{corollary}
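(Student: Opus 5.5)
The plan is to read everything off the spectral formula \eqref{eq:A2-spectral} from Theorem~\ref{thm:spectral-A2}. Orthogonality of spherical harmonics gives
\begin{equation*}
\|\hodd\|_{L^2}^2=\sum_{\ell\text{ odd}}\|h_{\mathrm{odd},\ell}\|_{L^2}^2 .
\end{equation*}
Translating $K$ adds a linear function to $\hodd$ and leaves $\heven$, and hence $C$ and $\mathcal A_2$, unchanged. I will therefore first choose the translation for which $h_{\mathrm{odd},1}=0$. This is the Steiner-point type normalization: the $\ell=1$ component is $\langle a,u\rangle$ for a unique vector $a$, and replacing $K$ by $K-a$ removes it.

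Once the $\ell=1$ mode is gone, only odd $\ell\geqslant3$ contribute. The multiplier $\lambda_\ell-(n-1)=\ell(\ell+n-2)-(n-1)$ is strictly increasing in $\ell$. Its value at $\ell=3$ is $9+3n-6-n+1=2(n+2)$. Bounding every multiplier from below by $2(n+2)$ gives \eqref{eq:A2-stability}.

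For the equality case, the difference between the two sides of \eqref{eq:A2-stability} is
\begin{equation*}
\frac{1}{n(n-1)}\sum_{\ell\geqslant5,\ \ell\text{ odd}}\bigl(\lambda_\ell-\lambda_3\bigr)\|h_{\mathrm{odd},\ell}\|_{L^2}^2 ,
\end{equation*}
a sum of non-negative terms with strictly positive coefficients. It vanishes exactly when $\hodd$ lies in the degree-three eigenspace. Sharpness requires equality to be attained by actual convex bodies, not merely by functions. To see this, I will take $h=R+\varepsilon Y_3$ with $Y_3$ any nonzero cubic harmonic and $\varepsilon$ small. Then $Q_h=R\,\Id+\varepsilon Q_{Y_3}$ is positive definite, so $h$ is the support function of a $C^2_+$ body. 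For this body $\hodd=\varepsilon Y_3$ has no linear part, and equality holds. This shows the constant cannot be improved.

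Finally, \eqref{eq:combined-stability} will follow by chaining the $k=2$ case of Theorem~\ref{thm:isoperimetric-family}, namely $W_{n-1}(K)^2-\kappa_nW_{n-2}(K)\geqslant\kappa_n\mathcal A_2(K)$, with \eqref{eq:A2-stability} multiplied by $\kappa_n$.

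The only step that needs care is analytic. The identity \eqref{eq:A2-spectral} requires the termwise expansion of the Dirichlet energy $\int|\nabla\hodd|^2$. This holds because $\hodd\in C^2$, so $\hodd\in H^1$ and Parseval applies to both $\hodd$ and $\nabla\hodd$. I expect nothing further to be an obstacle.
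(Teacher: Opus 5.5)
Your proposal is correct and follows the paper's own route: remove the $\ell=1$ mode by translation, compare \eqref{eq:A2-spectral} term by term using $\lambda_3-(n-1)=2(n+2)$, read the equality case off the vanishing of the $\ell\geqslant5$ components, and chain with the $k=2$ case of Theorem~\ref{thm:isoperimetric-family}. Your explicit perturbation $h=R+\varepsilon Y_3$ is a small addition that shows actual $C^2_+$ bodies attain equality, so the sharpness claim is justified; the paper leaves this implicit, though it uses the same device in Lemma~\ref{lem:quartic-signs}.
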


\begin{proof}
Estimate \eqref{eq:A2-stability} follows term by term from
\eqref{eq:A2-spectral}.  Equality in the~spectral estimate requires that only the~degree-three component be
present.  The~first inequality in \eqref{eq:combined-stability} is
the~$k=2$ case of
Theorem~\ref{thm:isoperimetric-family}.
\end{proof}

\section{Dimension two}
\label{sec:dimension-two}

\noindent
Let $K\subset\RR^2$ be a~smooth strictly convex body.  Write $L(K)$
and $A(K)$ for its perimeter and area.  Then
$W_1(K)=\frac12L(K)$,
$W_0(K)=A(K)$, and 
$\kappa_2=\uppi$.
The~full-cover algebraic area of the~Wigner hedgehog is
\begin{equation*}
V(\hodd,\hodd)
=
\frac12\int_0^{2\uppi}
\left(\hodd^2-(\hodd')^2\right)\dd\theta
=
2A^*_{\Wig}(K).
\end{equation*}
It is non-positive.  Consequently,
\begin{equation}
\mathcal A_2(K)
=
A(C)-A(K)
=
-V(\hodd,\hodd)
=
2\left|A^*_{\Wig}(K)\right|.
\label{eq:planar-defect}
\end{equation}
For an~arbitrary planar convex body, we use
\begin{equation*}
A^*_{\Wig}(K):=-\frac12\bigl(A(C)-A(K)\bigr)
\end{equation*}
as the~definition of its projectively reduced algebraic Wigner area.
Schneider's middle-hedgehog construction
\cite[Sections~2--3]{SchneiderMiddle} shows that this convention agrees
with the~oriented area above in the~smooth strictly convex case.  Thus
\eqref{eq:planar-defect} holds for every planar convex body with this
interpretation.

\begin{figure}[h]
\centering
\includegraphics[width=0.666\textwidth]{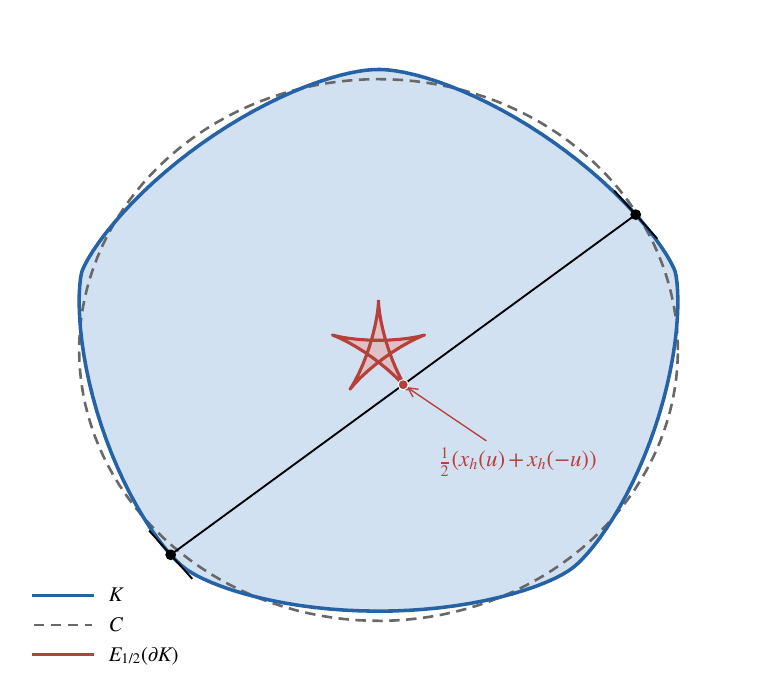}
\caption{The~convex body $K$ (blue), its central symmetral $C$
(dashed), and the~Wigner caustic $x_q(\SSph^1)$ (red).  The~black
chord joins two boundary points with opposite Gauss directions, and
the~short black segments indicate their parallel tangent lines.  The~
marked red midpoint belongs to the~Wigner caustic, which is traversed
twice by the~parametrization $x_q$}
\label{fig:n2-decomposition}
\end{figure}

Figure~\ref{fig:n2-decomposition} illustrates the~decomposition for
the~explicit support function
\begin{equation}
h(\theta)
=
2+0.10\cos(2\theta)+0.068\sin(5\theta),
\qquad
\theta\in\RR/2\uppi\ZZ.
\label{eq:n2-example-support}
\end{equation}
Its even and odd parts are, respectively,
$c(\theta)=2+0.10\cos(2\theta)$,
$q(\theta)=0.068\sin(5\theta)$.
Thus $c$ is the~support function of the~central symmetral $C$, whereas
$x_q(\SSph^1)$ parametrizes the~Wigner caustic.  Moreover,
\begin{equation}
h(\theta)+h''(\theta)
=2-0.30\cos(2\theta)-1.632\sin(5\theta)
\geqslant0.068,
\label{eq:n2-strict-convexity}
\end{equation}
so $h$ is the~support function of a~smooth strictly convex body.
The~marked points in the~figure are
$p_+=x_h(u_0)$,
$p_-=x_h(-u_0)$,
$u_0=(\cos\theta_0,\sin\theta_0)$ 
for
$\theta_0=0.73$.
Their outer normals are opposite, hence their tangent lines are
parallel, and linearity of \eqref{eq:hedgehog-param} gives
\begin{equation*}
\frac12(p_++p_-)=x_q(u_0)\in\mathrm{E}_{1/2}(\partial K).
\end{equation*}

Since $L(C)=L(K)$, Theorem~\ref{thm:isoperimetric-family} gives the following result for $n=k=2$.
\begin{theorem}\cite[Theorem~3.4]{Zwier2016}\label{thm:ImprovedIsoIneq}
Let $K$ be a convex planar body. Then
\begin{equation}
L(K)^2
\geqslant
4\uppi
\left(
A(K)+2\left|A^*_{\Wig}(K)\right|
\right).
\label{eq:planar-isoperimetric}
\end{equation}
Equality holds if and only if $K$ has constant width.
\end{theorem}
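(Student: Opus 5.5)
The plan is to specialize Theorem~\ref{thm:isoperimetric-family} to $n=k=2$ and then translate each quantity into planar notation.

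First, in the plane we have $W_1(K)=\frac12L(K)$, $W_0(K)=A(K)$ and $\kappa_2=\uppi$. Inequality \eqref{eq:isoperimetric-deficit} with $n=k=2$ then reads
\begin{equation*}
\tfrac14L(K)^2-\uppi A(K)\geqslant\uppi\,\mathcal A_2(K).
\end{equation*}
Multiplying by $4$ gives $L(K)^2\geqslant4\uppi\bigl(A(K)+\mathcal A_2(K)\bigr)$.

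Second, I replace $\mathcal A_2(K)=A(C)-A(K)$ by $2|A^*_{\Wig}(K)|$ using \eqref{eq:planar-defect}. This requires a case distinction.

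\begin{itemize}
\item For a smooth strictly convex body, the identity follows from Theorem~\ref{thm:exact-expansion} with $k=n=2$, which gives $\mathcal A_2=-V(\hodd,\hodd)$. Combined with the full-cover area formula, $V(\hodd,\hodd)=2A^*_{\Wig}(K)\leqslant0$, this yields the claim. The sign comes from Wirtinger's inequality applied to the $\uppi$-antiperiodic function $\hodd$.
\item For an arbitrary planar convex body, the identity holds by the convention $A^*_{\Wig}(K):=-\frac12(A(C)-A(K))$ adopted above. That convention is consistent with the smooth case via Schneider's middle hedgehog, and its sign is guaranteed by Theorem~\ref{thm:positive-defects}.
\end{itemize}

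Alternatively, one proves the inequality for $C^2_+$ bodies and passes to limits. This works because $L$, $A$ and $\mathcal A_2$ are continuous under Hausdorff convergence, and the approximation commutes with the even--odd splitting, as discussed in the subsection on regularity and approximation.

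Third, for the equality case I invoke the equality clause of Theorem~\ref{thm:isoperimetric-family}. Equality holds iff $C$ is a disc, and by \eqref{eq:constant-width-even-part} this happens iff $K$ has constant width.

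The only delicate point is the non-smooth case. I must make sure the equality characterization is not lost under approximation, and it is not, because the equality statement of Theorem~\ref{thm:isoperimetric-family} is proved directly for arbitrary convex bodies rather than by a limiting argument. Everything else is bookkeeping of normalizations, in particular the factor $4$ from $W_1=L/2$ and the factor $2$ between the full-cover and the projectively reduced Wigner areas.
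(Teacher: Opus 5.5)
Your proposal is correct and is the paper's own argument: specialize Theorem~\ref{thm:isoperimetric-family} to $n=k=2$ with $W_1=\frac12L$, $W_0=A$, $\kappa_2=\uppi$, rewrite $\mathcal A_2(K)$ as $2|A^*_{\Wig}(K)|$ via \eqref{eq:planar-defect}, and take the equality case directly from that theorem. The approximation detour is harmless but unnecessary, because for non-smooth bodies the identity $\mathcal A_2(K)=2|A^*_{\Wig}(K)|$ holds by the adopted convention.
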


The improved isoperimetric inequality, stated here as Theorem~\ref{thm:ImprovedIsoIneq}, is the main result of \cite{Zwier2016}. Its original proof relies on the Fourier series expansion of the support function of a planar convex body.

Proposition~\ref{prop:RS-bound}, applied with $n=k=2$, gives a~sharp
global bound for the~algebraic Wigner area.

\begin{corollary}[Sharp planar Wigner-caustic area bound]
\label{cor:planar-wigner-area-bound}
Let $K\subset\RR^2$ be a~convex body.  Then the~algebraic area of its
projective Wigner hedgehog satisfies
\begin{equation}
-\frac14A(K)
\leq
A^*_{\Wig}(K)
\leq
0.
\label{eq:planar-wigner-area-bound}
\end{equation}
The~constant $\frac14$ is sharp.  Equality in the~left-hand inequality
holds if and only if $K$ is a~triangle, whereas equality in the~right-hand inequality holds if and only if $K$ is centrally symmetric
up to translation.
\end{corollary}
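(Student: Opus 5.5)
The corollary follows by specializing Proposition~\ref{prop:RS-bound} to $n=k=2$ and rewriting the defect through \eqref{eq:planar-defect}. For $n=k=2$ we have $W_{n-k}(K)=W_0(K)=A(K)$, and the Rogers--Shephard constant is
\[
2^{-2}\binom{4}{2}-1=\tfrac12 .
\]
So \eqref{eq:RS-defect} reads $0\leqslant\mathcal A_2(K)\leqslant\tfrac12A(K)$. For an arbitrary planar convex body, the convention fixed after \eqref{eq:planar-defect} defines $A^*_{\Wig}(K)=-\tfrac12\mathcal A_2(K)$. In the smooth strictly convex case this agrees with the oriented area, by \eqref{eq:planar-defect} and Schneider's middle-hedgehog identification. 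Substituting gives
\[
-\tfrac14A(K)\leqslant A^*_{\Wig}(K)\leqslant0 ,
\]
which is \eqref{eq:planar-wigner-area-bound}.

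\emph{Equality cases.} On the right, $A^*_{\Wig}(K)=0$ means $\mathcal A_2(K)=0$. By Theorem~\ref{thm:positive-defects} with $k=n=2$, this happens exactly when $K$ is centrally symmetric up to translation.

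On the left, equality means $\mathcal A_2(K)=\tfrac12A(K)$. Since $A(C)=\tfrac14A(K-K)$, this is the same as $A(K-K)=6A(K)$, i.e.\ equality in the planar Rogers--Shephard inequality $A(K-K)\leqslant\binom42A(K)$. By the characterization quoted in the proof of Proposition~\ref{prop:RS-bound}, that equality holds if and only if $K$ is a triangle. The same equivalence shows the constant $\tfrac14$ is attained, hence sharp.

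As an independent check, a direct computation for a triangle $T$ gives $T-T$ as a hexagon of area $6A(T)$. Then $A(C)=\tfrac32A(T)$, so $\mathcal A_2(T)=\tfrac12A(T)$ and $A^*_{\Wig}(T)=-\tfrac14A(T)$.

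The only real obstacle is interpretive: for nonsmooth $K$, in particular the triangle itself, $A^*_{\Wig}$ is defined by the convention rather than as an oriented area. That convention makes the bounds exact for all convex bodies, and the equality statements are understood in that sense.
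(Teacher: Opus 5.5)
Your proposal is correct and follows the paper's route: specialize Proposition~\ref{prop:RS-bound} to $n=k=2$ to get $0\leqslant\mathcal A_2(K)\leqslant\frac12A(K)$, then rewrite it via \eqref{eq:planar-defect}, using the convention $A^*_{\Wig}(K)=-\frac12\mathcal A_2(K)$ for nonsmooth bodies, and read off the equality cases from the Rogers--Shephard characterization and Theorem~\ref{thm:positive-defects}. Your closing remark, that the left-hand equality is attained only in this conventional nonsmooth sense, matches the paper's comment that smooth strictly convex bodies give strict inequality while triangles show the constant $\frac14$ is still optimal.
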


For smooth strictly convex bodies, the~left-hand inequality in
\eqref{eq:planar-wigner-area-bound} is strict, since such a body cannot
be a~triangle.  Nevertheless, the~constant $\frac14$ remains optimal,
as triangles can be approximated by smooth strictly convex bodies.

\section{Dimension three}
\label{sec:dimension-three}

\noindent
Let $K\subset\RR^3$ be of class $C^2_+$.  Denote by
\begin{equation*}
M(K)=\int_{\partial K}H\dd S
\end{equation*}
the~integral of the~normalized mean curvature.  With the~standard
normalization,
$S(K)=3W_1(K)$,
$M(K)=3W_2(K)$, and 
$\kappa_3={4\uppi}/{3}$.

\subsection{The~quadratic surface correction}

\noindent
On $\SSph^2$, let
$R_{\hodd}=\det Q_{\hodd}$.
Define the~non-negative full-cover algebraic Wigner surface defect by
\begin{equation*}
\mathscr S_{\Wig}(K)
=
-\int_{\SSph^2}R_{\hodd}\dd\sigma.
\end{equation*}
Because
\begin{equation*}
3V(\hodd,\hodd,1)=\int_{\SSph^2}R_{\hodd}\dd\sigma,
\end{equation*}
we have
\begin{equation}
\mathscr S_{\Wig}(K)=3\mathcal A_2(K).
\label{eq:Wigner-surface-defect}
\end{equation}
The~projection formula \eqref{eq:Wigner-projection} becomes
\begin{equation}
\mathscr S_{\Wig}(K)
=
8
\int_{\Gr(3,2)}
\left|A^*_{\Wig}(K\restr E)\right|
\dd E.
\label{eq:Wigner-surface-projection}
\end{equation}
Moreover,
\begin{equation}
S(C)=S(K)+\mathscr S_{\Wig}(K).
\label{eq:surface-central-symmetral-3d}
\end{equation}
The~quotient parameter space $\mathbb{RP}^2$ is non-orientable, so
\eqref{eq:Wigner-surface-projection}, rather than an~unsigned area of
the~image, is the~invariant
that enters naturally.

Figure~\ref{fig:n3-hedgehog} is based on the~axisymmetric support
function
\begin{equation}
h(u)=2+q(u),
\qquad
q(u)=\varepsilon f(u_3),
\qquad
\varepsilon=0.078,
\label{eq:n3-example-support}
\end{equation}
where
$f(t)=16t^5-20t^3+5t$.
Since $f$ is odd, we have
$h(u)+h(-u)=4$.
Thus $K$ has constant width $4$, its even support function is the~constant $c=2$, and its central symmetral is the~ball $C=2B^3$.

Both $K$ and its Wigner hedgehog $x_q(\SSph^2)$ are invariant under
rotations about the~$e_3$-axis.  Indeed, the~two principal eigenvalues
of $Q_f=\nabla^2_{\SSph^2}f+f\Id$ are
$-24f(t)$ and 
$f(t)-tf'(t)=-64t^5+40t^3$,
and both belong to $[-24,24]$.  Consequently,
$Q_h=
2\Id+\varepsilon Q_f
\geqslant
(2-24\varepsilon)\Id=
0.128\Id$,
so the~pictured body is smooth and strictly convex.

Let
$E=\operatorname{span}\{e_1,e_3\}$
be a~meridional plane.  Along the~great circle
$u(\theta)=(\cos\theta,0,\sin\theta)$
the~odd part reduces to
$q(u(\theta))
=
0.078\sin(5\theta)$.
Therefore the~Wigner caustic of the~orthogonal projection
$K_E=K\restr E$ has the~five-cusped pentagram shape shown in
Figure~\ref{fig:n3-hedgehog}(B).  The~middle part of panel~(A) is a~
magnified three-dimensional view of the~full axisymmetric hedgehog
$x_q(\SSph^2)$; the~dark red curve is its meridional generating curve.

This surface of revolution is deliberately highly nongeneric: axial
symmetry forces degeneracies that are absent after a~generic
perturbation.  Generic singularities of Wigner caustics and affine
equidistants in the~related Lagrangian and higher-dimensional surface
settings are classified or analyzed in
\cite{CraizerDomitrzRios,DomitrzJaneczkoRiosRuas,DomitrzManoelRios,
DomitrzRios,DomitrzRiosRuas,GiblinReeve}.

\begin{figure}[h]
\centering
\includegraphics[width=\textwidth]{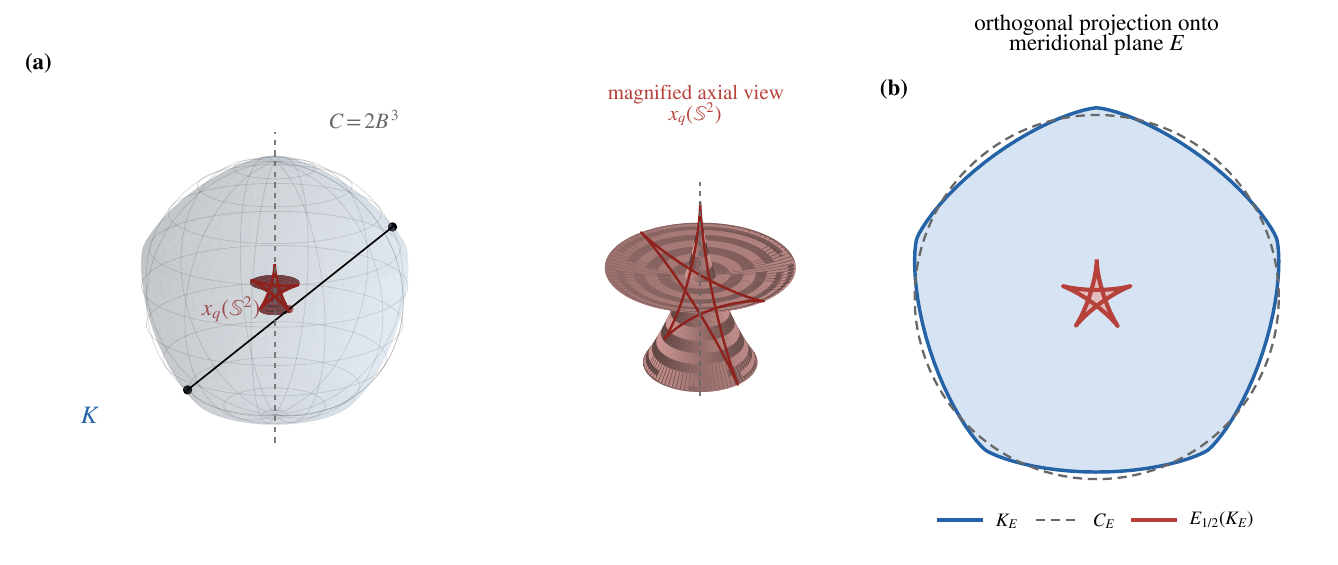}
\caption{An~axisymmetric constant-width body and its Wigner hedgehog
in dimension three.  \textup{(A)} Left: the~body $K$ (blue), its
spherical central symmetral $C=2B^3$ (gray wireframe), the~singular
Wigner hedgehog $x_q(\SSph^2)$ (red), and one antipodal Gauss chord.
Centre: a~magnified three-dimensional view of the~axisymmetric
hedgehog and its meridional generating curve.  \textup{(B)} The~orthogonal projection onto the~meridional plane
$E=\operatorname{span}\{e_1,e_3\}$, showing $K_E$, the~central disk
$C_E$, and the~pentagram-shaped planar Wigner caustic
$\mathrm{E}_{1/2}(\partial K_E)$}
\label{fig:n3-hedgehog}
\end{figure}

\subsection{The~volume correction}

\noindent
Parity gives
$V(\hodd,\hodd,\hodd)=0$
and
\begin{equation*}
\Vol_3(K)=\Vol_3(C)+3V(\heven,\hodd,\hodd).
\end{equation*}
Therefore
\begin{equation}
\mathcal A_3(K)
=
-3V(\heven,\hodd,\hodd)
=
-\int_{\SSph^2}\heven R_{\hodd}\dd\sigma.
\label{eq:volume-defect-3d}
\end{equation}
Thus $\mathcal A_3$ is not the~algebraic volume of the~Wigner
hedgehog.  That volume vanishes by parity.  It is instead the~Wigner
area density weighted by the~even support function $\heven$.

Theorem~\ref{thm:isoperimetric-family} now yields two genuine
refinements:
\begin{align}
M(K)^2
&\geqslant
4\uppi\left(S(K)+\mathscr S_{\Wig}(K)\right),
\label{eq:3d-isoperimetric-k2}\\
M(K)^3
&\geqslant
48\uppi^2
\left(\Vol_3(K)+\mathcal A_3(K)\right).
\label{eq:3d-isoperimetric-k3}
\end{align}
The~corrected surface--volume inequality is
\begin{equation}
\left(S(K)+\mathscr S_{\Wig}(K)\right)^3
\geqslant
36\uppi
\left(\Vol_3(K)+\mathcal A_3(K)\right)^2.
\label{eq:3d-symmetral-isoperimetric}
\end{equation}
All three equalities characterize constant width.

Proposition~\ref{prop:RS-bound} gives
\begin{equation}
0\leqslant\mathscr S_{\Wig}(K)\leqslant\frac12S(K),
\qquad
0\leqslant\mathcal A_3(K)\leqslant\frac32\Vol_3(K).
\label{eq:3d-RS-defects}
\end{equation}

\subsection{Constant width and the~Blaschke formula}

\noindent
If $K$ has constant width $2r$, then $\heven=r$ and
\eqref{eq:Wigner-surface-defect}, \eqref{eq:volume-defect-3d} give
$\mathcal A_3(K)=r\mathscr S_{\Wig}(K)$.
Since $C=rB^3$,
\begin{align}
S(K)
&=4\uppi r^2-\mathscr S_{\Wig}(K),
\label{eq:constant-width-surface-3d}\\
\Vol_3(K)
&=\frac{4\uppi}{3}r^3-r\mathscr S_{\Wig}(K).
\label{eq:constant-width-volume-3d}
\end{align}
Eliminating $\mathscr S_{\Wig}$ gives the~classical Blaschke relation
\begin{equation}
\Vol_3(K)
=
rS(K)-\frac{8\uppi}{3}r^3.
\label{eq:Blaschke-3d}
\end{equation}
The~preceding constant-width formulas are exactly formulas of Theorem~2 in \cite{AnciauxGuilfoyle} of Anciaux and Guilfoyle. They prove them using the different support-function calculations.  In the~notation used in \cite{AnciauxGuilfoyle,Nishioka2026}, the~quadratic
functional denoted there by $E$ is exactly
$\mathscr S_{\Wig}(K)$ when evaluated on $\hodd$.

\section{Dimension four}
\label{sec:dimension-four}

\noindent
Now let $K\subset\RR^4$ be of class $C^2_+$.  Since
$\kappa_4=\uppi^2/2$,
define the~mean width by
\begin{equation*}
\overline w(K)
=
\frac{2W_3(K)}{\kappa_4}
=
\frac{4}{\uppi^2}W_3(K).
\end{equation*}
The~three nontrivial defects are
\begin{align}
\mathcal A_2(K)
&=-V(\hodd,\hodd,1,1),
\label{eq:A2-4d}\\
\mathcal A_3(K)
&=-3V(\heven,\hodd,\hodd,1),
\label{eq:A3-4d}\\
\mathcal A_4(K)
&=-6V(\heven,\heven,\hodd,\hodd)
-V(\hodd,\hodd,\hodd,\hodd).
\label{eq:A4-4d}
\end{align}
In particular,
\begin{align}
S(C)
&=S(K)+4\mathcal A_3(K),
\label{eq:C-surface-4d}\\
\Vol_4(C)
&=\Vol_4(K)+\mathcal A_4(K).
\label{eq:C-volume-4d}
\end{align}

\subsection{Projected Wigner areas and curvature}

\noindent
The~three defects already have complementary integral-geometric
meanings.  Formula \eqref{eq:Wigner-projection} specializes to
\begin{equation}
\mathcal A_2(K)
=
\uppi
\int_{\Gr(4,2)}
\left|A^*_{\Wig}(K\restr E)\right|
\dd E.
\label{eq:A2-projection-4d}
\end{equation}
whereas Proposition~\ref{prop:projection-defect} gives
\begin{equation}
\mathcal A_3(K)=\frac{3\uppi}{8}
\int_{\Gr(4,3)}
\left[\Vol_3(C\restr E)-\Vol_3(K\restr E)\right]\dd E,
\label{eq:A3-projection-4d}
\end{equation}
and $\mathcal A_4(K)=\Vol_4(C)-\Vol_4(K)$.  Thus $\mathcal A_2$ is a~
mean projectively reduced Wigner area, $\mathcal A_3$ is a~mean
three-dimensional symmetrization gain, and $\mathcal A_4$ is the~full
four-dimensional gain.
If $H$ denotes normalized mean curvature of $\partial K$, then
\begin{equation*}
W_2(K)=\frac14\int_{\partial K}H\dd S.
\end{equation*}
The~$k=2$ case of the~central-symmetral family in
Theorem~\ref{thm:isoperimetric-family}, equivalently of
\eqref{eq:isoperimetric-deficit}, therefore becomes
\begin{equation}
\overline w(K)^2
\geqslant
\frac{2}{\uppi^2}
\int_{\partial K}H\dd S
+\frac{8}{\uppi}
\int_{\Gr(4,2)}
\left|A^*_{\Wig}(K\restr E)\right|
\dd E.
\label{eq:mean-width-k2-4d}
\end{equation}

The~remaining two members are
\begin{align}
\overline w(K)^3
&\geqslant
\frac{4}{\uppi^2}
\left(S(K)+4\mathcal A_3(K)\right),
\label{eq:mean-width-k3-4d}\\
\overline w(K)^4
&\geqslant
\frac{32}{\uppi^2}
\left(\Vol_4(K)+\mathcal A_4(K)\right).
\label{eq:mean-width-k4-4d}
\end{align}
Equality in \eqref{eq:mean-width-k2-4d}--\eqref{eq:mean-width-k4-4d}
is equivalent to constant width.

The~symmetral surface--volume inequality becomes
\begin{equation}
\left(S(K)+4\mathcal A_3(K)\right)^4
\geqslant
128\uppi^2
\left(\Vol_4(K)+\mathcal A_4(K)\right)^3.
\label{eq:symmetral-isoperimetric-4d}
\end{equation}

\subsection{The~quartic Wigner volume}

\noindent
The~first three quadratic mixed volumes in \eqref{eq:A2-4d}--\eqref{eq:A4-4d} have fixed
signs:
\begin{align*}
V(\hodd,\hodd,1,1)&\leqslant0,\\
V(\heven,\hodd,\hodd,1)&\leqslant0,\\
V(\heven,\heven,\hodd,\hodd)&\leqslant0,
\end{align*}
by \eqref{eq:quadratic-response-sign} with $k=2,3,4$.  In contrast,
$V(\hodd,\hodd,\hodd,\hodd)$ has no separate
universal sign forced by the~argument.

The~Gauss-cover algebraic volume of the~Wigner hedgehog is
\begin{equation}
V_{\Wig}^{\mathrm{cov}}(K)
:=
V(\hodd,\hodd,\hodd,\hodd)
=
\frac14
\int_{\SSph^3}\hodd\det Q_{\hodd}\dd\sigma.
\label{eq:Wigner-volume-cover}
\end{equation}
Since $x_{\hodd}(-u)=x_{\hodd}(u)$, it is natural to introduce the~projectively
reduced normalization
\begin{equation}
V^*_{\Wig}(K)=\frac12V_{\Wig}^{\mathrm{cov}}(K).
\label{eq:Wigner-volume-projective}
\end{equation}
Here $\mathbb{RP}^3$ is orientable and the~antipodal covering
$\SSph^3\to\mathbb{RP}^3$ is orientation preserving.  Thus the~factor
$1/2$ is the~natural projective normalization of the~algebraic volume
after fixing the~induced orientation.  This is an~algebraic quantity
with multiplicity.  No regular embedded hypersurface is assumed.  Formula
\eqref{eq:A4-4d} becomes
\begin{equation}
\mathcal A_4(K)
=
-6V(\heven,\heven,\hodd,\hodd)-2V^*_{\Wig}(K).
\label{eq:A4-projective-Wigner}
\end{equation}
Thus the~four-dimensional volume correction splits into a~quadratic
term weighted twice by the~central symmetral and a~pure projective
Wigner volume.

Positivity of $\mathcal A_4$ also gives the~quartic control
\begin{equation}
V(\hodd,\hodd,\hodd,\hodd)
\leqslant
-6V(\heven,\heven,\hodd,\hodd).
\label{eq:quartic-basic-control}
\end{equation}
Equality in \eqref{eq:quartic-basic-control} holds precisely when $K$
is centrally symmetric up to translation.

\subsection{A~four-dimensional Blaschke-type identity}

\noindent
Suppose that $K$ has constant width $2r$.  Then $\heven=r$ and
\begin{align}
\mathcal A_3(K)
&=3r\mathcal A_2(K),
\label{eq:A3-A2-constant-width-4d}\\
\mathcal A_4(K)
&=6r^2\mathcal A_2(K)-V(\hodd,\hodd,\hodd,\hodd).
\label{eq:A4-A2-constant-width-4d}
\end{align}
Since $C=rB^4$, one obtains
\begin{align}
S(K)
&=4\kappa_4r^3-12r\mathcal A_2(K),
\label{eq:surface-constant-width-4d}\\
\Vol_4(K)
&=\kappa_4r^4-6r^2\mathcal A_2(K)
+V(\hodd,\hodd,\hodd,\hodd).
\label{eq:volume-constant-width-4d}
\end{align}
Eliminating $\mathcal A_2$ yields
\begin{equation}
\Vol_4(K)
=
\frac r2S(K)
-\frac{\uppi^2}{2}r^4
+V(\hodd,\hodd,\hodd,\hodd).
\label{eq:Blaschke-4d}
\end{equation}
Equivalently, the~final term is $2V^*_{\Wig}(K)$.  Formula
\eqref{eq:Blaschke-4d}
shows exactly why the~three-dimensional Blaschke relation does not
extend to a~linear volume--surface identity in dimension four: a~genuine quartic Wigner invariant remains.

\subsection{The~quartic invariant assumes both signs}

\noindent
The~absence of a~sign assertion after \eqref{eq:A4-4d} is essential,
not merely
a~limitation of the~argument.  The~following explicit computation
also shows that the~phenomenon persists arbitrarily close to a~ball.

\begin{lemma}[Explicit quartic Wigner volumes of opposite signs]
\label{lem:quartic-signs}
On $\SSph^3\subset\RR^4$, let
\begin{align}
q_-(u)
=u_1^3-\frac12u_1,
\qquad
q_+(u)
=(u_1^2-u_2^2)u_3-2u_1u_2u_4.
\label{eq:cubic-harmonics}
\end{align}
Both are odd spherical harmonics of degree three, and
\begin{equation}
\begin{array}{c|cc}
q&-V(q,q,1,1)=\|q\|_{L^2(\SSph^3)}^2&V(q,q,q,q)\\
\hline
q_-&\displaystyle\frac{\uppi^2}{32}&
\displaystyle-\frac{3\uppi^2}{128}\\[5pt]
q_+&\displaystyle\frac{\uppi^2}{12}&
\displaystyle\frac{\uppi^2}{7}
\end{array}
\label{eq:cubic-harmonic-values}
\end{equation}
Consequently, for every $r>0$ and all sufficiently small nonzero
$\varepsilon$, each function
\begin{equation*}
h_{\varepsilon,\pm}=r+\varepsilon q_\pm
\end{equation*}
is the~support function of a~$C^\infty$ body of class $C^2_+$ and
constant width $2r$.  Its quartic invariant is
$\varepsilon^4V(q_\pm,q_\pm,q_\pm,q_\pm)$.  Hence both signs occur
among constant-width bodies arbitrarily close to $rB^4$.
\end{lemma}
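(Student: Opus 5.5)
The plan is to reduce everything to polynomial integrals on $\SSph^3$, using the mixed-volume integral \eqref{eq:mixed-volume-integral} and the bilinear formula \eqref{eq:mixed-bilinear}. The constant-width consequence then follows formally.

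\emph{Harmonicity and the quadratic column.} I would first check that the cubic polynomials $P_-=u_1^3-\frac12u_1|u|^2$ and $P_+=(u_1^2-u_2^2)u_3-2u_1u_2u_4$ are harmonic on $\RR^4$. For $P_-$ one has $\Delta P_-=6u_1-\frac12(2u_1+2u_1\cdot 4)=6u_1-5u_1-u_1=0$, wait: $\Delta(u_1|u|^2)=2u_1\cdot 4/1\cdot\frac{1}{4}$ must be computed carefully; the coefficient $\frac12$ is exactly the one making $\Delta P_-=0$ (since $\Delta(u_1|u|^2)=12u_1$ in $\RR^4$). For $P_+$, note $P_+=\operatorname{Re}(z_1^2z_2)$ with $z_1=u_1+iu_2$, $z_2=u_3+iu_4$, which is holomorphic and hence harmonic. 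Both polynomials are odd, so $q_\pm$ are odd degree-three spherical harmonics with $-\Delta q_\pm=15q_\pm$. Formula \eqref{eq:mixed-bilinear} with $n=4$ then gives
\begin{equation*}
V(q,q,1,1)=\tfrac1{12}\textstyle\int(3q^2-|\nabla q|^2)\dd\sigma=\tfrac1{12}(3-15)\|q\|^2=-\|q\|^2.
\end{equation*}
The norms $\|q_\pm\|^2$ come from the standard monomial moments on $\SSph^3$, with total area $2\uppi^2$, for instance $\int u_1^2=\uppi^2/2$, $\int u_1^4=\uppi^2/4$, $\int u_1^6=5\uppi^2/32$, $\int u_1^2u_2^2=\uppi^2/12$. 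For $q_+$ one can use $|z_1|^4|z_2|^2$-type moments instead.

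\emph{Quartic column.} Next I would use $V(q,q,q,q)=\frac14\int_{\SSph^3}q\det Q_q\dd\sigma$. For a degree-$d$ homogeneous extension $P$, the spherical Hessian is the restriction to $u^\perp$ of $D^2P-dP\,\Id$. Hence
\begin{equation*}
Q_q=\bigl(D^2P-2P\,\Id\bigr)\big|_{u^\perp}.
\end{equation*}
I would compute $\det Q_q$ as the bordered determinant $-\det\begin{pmatrix}D^2P-2P\Id&u\\u^T&0\end{pmatrix}$ evaluated on the sphere. This gives a polynomial, and integrating $P\det Q_q$ against the sphere moments gives the tabulated values.

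For $q_-$, everything depends only on $t=u_1$. The eigenvalues of $Q$ are $(1-t^2)q''-tq'+q$ along the meridian, which I would cross-check with the bordered determinant, and $q-tq'$ twice on the orthogonal directions. The integral therefore reduces to a one-variable integral with weight $\frac{\dd\sigma}{\dd t}=2\uppi\sqrt{1-t^2}$, i.e. a routine Beta-integral.

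For $q_+$ I expect the main obstacle: the determinant is a messy degree-six polynomial. I would first use the torus symmetry $(z_1,z_2)\mapsto(e^{i\alpha}z_1,e^{-2i\alpha}z_2)$ together with $z_2\mapsto e^{i\beta}z_2$, $z_1\mapsto e^{-i\beta/2}z_1$, under which $P_+$ only changes by a phase. This lets me evaluate the integrand on the slice $u_2=u_4=0$ up to averaging, or at least discard all non-invariant monomials before integrating. Failing that, I would do a direct symbolic expansion. The signs of the two answers should be checked against \eqref{eq:quartic-basic-control}.

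\emph{Constant width.} For $h_{\varepsilon,\pm}=r+\varepsilon q_\pm$, oddness of $q_\pm$ gives $h(u)+h(-u)=2r$. Moreover $Q_h=r\Id+\varepsilon Q_{q_\pm}$, where $Q_{q_\pm}$ is bounded on the compact sphere, so $Q_h$ is positive definite for $|\varepsilon|<r/\max\|Q_{q_\pm}\|$. Thus $h$ is the support function of a $C^\infty$ body of class $C^2_+$, and by \eqref{eq:constant-width-even-part} it has constant width $2r$. Its odd part is $\hodd=\varepsilon q_\pm$, so multilinearity gives quartic invariant $\varepsilon^4V(q_\pm^{[4]})$. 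This is nonzero with the sign of the table, and the body tends to $rB^4$ as $\varepsilon\to0$.
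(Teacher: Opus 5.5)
Your plan is correct and follows the paper's proof essentially step for step: harmonic cubic extensions, $Q_q=(D^2P-2P\,\Id)|_{u^\perp}$ with $V(q^{[4]})=\frac14\int q\det Q_q\dd\sigma$, the zonal eigenvalues for $q_-$ (which give exactly $6t-8t^3,-2t^3,-2t^3$), the eigenvalue $15$ for the quadratic column, and $|\varepsilon|<r/\max\|Q_{q_\pm}\|_{\mathrm{op}}$ for convexity; the paper handles $q_+$ in Hopf coordinates, which is the clean version of your torus reduction. Two slips need fixing. First, the pushforward density on $\SSph^3$ is $4\uppi\sqrt{1-t^2}\dd t$, not $2\uppi\sqrt{1-t^2}\dd t$; the latter contradicts your own moments. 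Second, both torus maps you list fix $z_1^2z_2$, so the integrand still depends on the phase $\psi=2\alpha+\beta$, and the slice $u_2=u_4=0$ suffices only after averaging over a phase-changing rotation such as $z_2\mapsto e^{i\beta}z_2$; this is exactly what the paper's averages $\langle C_\psi^4\rangle=3/8$ and $\langle C_\psi^2S_\psi^2\rangle=1/8$ do.
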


\begin{proof}
The~functions in \eqref{eq:cubic-harmonics} are the~restrictions of
the~harmonic cubics
\begin{equation*}
H_-(x)=x_1^3-\frac12x_1|x|^2,
\qquad
H_+(x)=\operatorname{Re}\bigl((x_1+\mathrm{i}x_2)^2
(x_3+\mathrm{i}x_4)\bigr).
\end{equation*}
For any homogeneous cubic $H$ and $q=H\restr\SSph^3$, the~spherical
Hessian formula gives
\begin{equation}
Q_q
=
\left.
P_u\bigl(D^2H(u)-2H(u)\Id\bigr)P_u
\right|_{u^\perp},
\qquad
V(q^{[4]})
=
\frac14\int_{\SSph^3}q\det Q_q\dd\sigma,
\label{eq:homogeneous-cubic-formula}
\end{equation}
where $P_u$ is orthogonal projection onto $u^\perp$.  This is also a~direct specialization of \eqref{eq:Q-def} and
\eqref{eq:mixed-volume-integral}.

For $q_-$, put $t=u_1$.  The~eigenvalues of $Q_{q_-}$ are
$6t-8t^3$, $-2t^3$, and $-2t^3$.
Consequently,
\begin{align*}
V(q_-^{[4]})
&=\int_{\SSph^3}
\left(10t^{10}-8t^{12}-3t^8\right)\dd\sigma
=10\frac{21\uppi^2}{256}
-8\frac{33\uppi^2}{512}
-3\frac{7\uppi^2}{64}
=-\frac{3\uppi^2}{128}.
\end{align*}
Here we used the~standard moment identity
\begin{equation*}
\int_{\SSph^3}t^{2m}\dd\sigma
=
2\uppi^2
\frac{1\cdot3\cdot\ldots\cdot (2m-1)}
{4\cdot6\cdot\ldots\cdot (2m+2)},
\qquad m\geqslant1.
\end{equation*}
The~same identity gives $\|q_-\|_2^2=\uppi^2/32$.

For completeness, the~positive example can be evaluated without a~computer algebra system.  Use Hopf coordinates
\begin{equation*}
u_1+\mathrm{i}u_2=a\,\mathrm{e}^{\mathrm{i}\alpha},
\qquad
u_3+\mathrm{i}u_4=b\,\mathrm{e}^{\mathrm{i}\beta},
\qquad
a=\cos\eta,\quad b=\sin\eta,
\end{equation*}
and put $\psi=2\alpha+\beta$, $C_\psi=\cos\psi$,
$S_\psi=\sin\psi$.  Then $q_+=a^2bC_\psi$.  In the~orthonormal frame
$(\partial_\eta,a^{-1}\partial_\alpha,b^{-1}\partial_\beta)$,
away from the~two harmless coordinate circles represented by $a=0$ and $b=0$, direct differentiation
gives
\begin{equation}
Q_{q_+}=
\begin{pmatrix}
2b(4b^2-3)C_\psi &(4b^2-2)S_\psi&2abS_\psi\\
(4b^2-2)S_\psi&2b(b^2-2)C_\psi&-2aC_\psi\\
2abS_\psi&-2aC_\psi&-2ba^2C_\psi
\end{pmatrix}.
\label{eq:Hopf-matrix}
\end{equation}
Writing
$\det Q_{q_+}=C_\psi^3F(b)+C_\psi S_\psi^2G(b)$,
one obtains
$F(b)=8b(3-4b^2)(1-b^2)^3$ and 
$G(b)=24b(1-b^2)^3$,
and hence
$3F(b)+G(b)=96b(1-b^2)^4$.
With angle brackets denoting angular mean,
$\langle C_\psi^4\rangle=3/8$,
$\langle C_\psi^2S_\psi^2\rangle=1/8$, and
$\dd\sigma=ab\,\dd\eta\dd\alpha\dd\beta
=b\,\dd b\dd\alpha\dd\beta$,
\eqref{eq:Wigner-volume-cover} gives
\begin{align*}
V(q_+^{[4]})
&=\uppi^2\int_0^1
12b^3(1-b^2)^5\dd b
=\frac{\uppi^2}{7}.
\end{align*}
The~same coordinates give, explicitly,
\begin{equation*}
\|q_+\|_2^2
=2\uppi^2\int_0^1b^3(1-b^2)^2\dd b
=\frac{\uppi^2}{12}.
\end{equation*}
Finally, the~first equality in the~middle column of
\eqref{eq:cubic-harmonic-values} follows from
\eqref{eq:A2-spectral}, because degree three has eigenvalue $15$ on
$\SSph^3$.

The~continuous fields $Q_{q_\pm}$ are bounded.  Put
\begin{equation*}
M_\pm=\max_{\SSph^3}\|Q_{q_\pm}\|_{\mathrm{op}}.
\end{equation*}
Then
$Q_{h_{\varepsilon,\pm}}=r\Id+\varepsilon Q_{q_\pm}$ is positive
definite whenever $|\varepsilon|<r/M_\pm$.  Oddness gives
$h_{\varepsilon,\pm}(u)+h_{\varepsilon,\pm}(-u)=2r$, completing the~convexity and constant-width assertions.
\end{proof}

\begin{proposition}[The~exact cubic-harmonic plane]
\label{prop:cubic-harmonic-plane}
Let $f=q_-$ and $g=q_+$ be given by
\eqref{eq:cubic-harmonics}, put $q_{\alpha,\beta}=\alpha f+\beta g$,
and write
$\mathscr E(q)=-V(q,q,1,1)$.
Then
\begin{align}
\mathscr E(q_{\alpha,\beta})
&=\uppi^2\left(\frac{\alpha^2}{32}+\frac{\beta^2}{12}\right),
\label{eq:cubic-plane-energy}\\
V(q_{\alpha,\beta}^{[4]})
&=\uppi^2\left(
-\frac{3\alpha^4}{128}
+\frac{3\alpha^2\beta^2}{70}
+\frac{\beta^4}{7}
\right).
\label{eq:cubic-plane-quartic}
\end{align}
For every nonzero $q_{\alpha,\beta}$,
\begin{equation}
-\frac{24}{\uppi^2}\mathscr E(q_{\alpha,\beta})^2
\leqslant V(q_{\alpha,\beta}^{[4]})
\leqslant\frac{144}{7\uppi^2}\mathscr E(q_{\alpha,\beta})^2.
\label{eq:cubic-plane-sharp-bounds}
\end{equation}
Both constants are optimal on this plane: equality on the~left occurs
exactly on the~$f$-axis and equality on the~right exactly on the~$g$-axis.  Furthermore,
\begin{equation}
V(q_{\alpha,\beta}^{[4]})=0,
\qquad
\frac{\beta^2}{\alpha^2}
=\frac{\sqrt{1194}-12}{80},
\label{eq:cubic-plane-zero-cone}
\end{equation}
describes the~two nonzero zero lines.  Consequently, for every $r>0$
there are nonspherical $C^\infty$ bodies of constant width $2r$,
arbitrarily close to $rB^4$, whose quartic Wigner volume vanishes.
\end{proposition}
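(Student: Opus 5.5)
By multilinearity and symmetry of the mixed volume, expand both forms in $(\alpha,\beta)$ and compute the cross terms.

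\emph{The quadratic form.} The quadratic form $\mathscr E(q_{\alpha,\beta})$ equals $\|q_{\alpha,\beta}\|_{L^2}^2$ by \eqref{eq:A2-spectral}, since both $f$ and $g$ are degree-three harmonics. It therefore suffices to show $\langle f,g\rangle_{L^2}=0$. This follows from the symmetry $u_2\mapsto -u_2,\ u_4\mapsto-u_4$: it fixes $f$ and fixes $g$ as well. Since that gives no sign change, I use instead the rotation $(u_3,u_4)\mapsto(-u_3,-u_4)$, which fixes $f$ and negates $g$. The same rotation kills every term of $V(f^{[a]},g^{[b]})$ with $b$ odd. Lemma~\ref{lem:quartic-signs} then gives \eqref{eq:cubic-plane-energy}.

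\emph{The quartic form.} In $V(q_{\alpha,\beta}^{[4]})$ only the $\alpha^4$, $\alpha^2\beta^2$, and $\beta^4$ coefficients survive. The pure coefficients come from Lemma~\ref{lem:quartic-signs}. The mixed coefficient is $6V(f,f,g,g)$. I compute it by polarizing the Hopf-coordinate formula \eqref{eq:Hopf-matrix}. Writing $Q_f$ in the same frame, with $f=a^3\cos^3\alpha-\tfrac12 a\cos\alpha$, I evaluate
\[
\frac14\int_{\SSph^3} f\,D_3(Q_f,Q_g,Q_g)\,\dd\sigma.
\]
Equivalently, I expand $\tfrac14\int q\det Q_q\,\dd\sigma$ for $q=f+sg$ and read off the $s^2$ coefficient. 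The angular averages over $\alpha$ and $\beta$ reduce everything to moments $\int_0^1 b^{m}(1-b^2)^{p}\,\dd b$, and the target is $\tfrac{3\uppi^2}{70}\cdot\tfrac16=\tfrac{\uppi^2}{140}$ for $V(f,f,g,g)$. This computation is the main obstacle. It is mechanical, but $f$ is not adapted to the Hopf angle $\psi$, so the averages over $\alpha$ of products of $\cos^k\alpha$ with $\cos\psi$ and $\sin\psi$ must be tracked carefully. Only the terms with total $\beta$-frequency zero survive, which forces even powers of $C_\psi$ and $S_\psi$. A safer route is to work in ambient coordinates, using \eqref{eq:homogeneous-cubic-formula} and Gaussian moment integrals. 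The check is that the result must equal $\uppi^2\cdot\tfrac{3}{70}$.

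\emph{The bounds and the zero cone.} Set $x=\alpha^2\ge0$ and $y=\beta^2\ge0$. Then
\[
V=\uppi^2\Bigl(-\tfrac{3}{128}x^2+\tfrac{3}{70}xy+\tfrac17y^2\Bigr),
\qquad
\mathscr E^2=\uppi^4\Bigl(\tfrac{x}{32}+\tfrac{y}{12}\Bigr)^2.
\]
For the left bound, $V+\tfrac{24}{\uppi^2}\mathscr E^2$ equals $\uppi^2$ times
\[
\bigl(-\tfrac{3}{128}+\tfrac{24}{1024}\bigr)x^2+\bigl(\tfrac{3}{70}+\tfrac{24}{192}\bigr)xy+\bigl(\tfrac17+\tfrac{24}{144}\bigr)y^2 .
\]
The $x^2$ coefficient is $0$ and the others are positive, so the expression is nonnegative and vanishes exactly when $y=0$, which is the $f$-axis. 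For the right bound, $\tfrac{144}{7\uppi^2}\mathscr E^2-V$ equals $\uppi^2$ times
\[
\bigl(\tfrac{144}{7\cdot1024}+\tfrac{3}{128}\bigr)x^2+\bigl(\tfrac{144}{7\cdot192}-\tfrac{3}{70}\bigr)xy+0\cdot y^2 .
\]
Here $\tfrac{144}{1344}=\tfrac{3}{28}>\tfrac{3}{70}$, so the expression is nonnegative with equality exactly when $x=0$, the $g$-axis. Optimality of both constants is immediate from equality on the axes.

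For the zero cone, set $\rho=y/x$ and solve $\tfrac17\rho^2+\tfrac3{70}\rho-\tfrac3{128}=0$. Multiplying by $4480$ gives $640\rho^2+192\rho-105=0$, whose positive root is
\[
\rho=\frac{-192+\sqrt{192^2+4\cdot640\cdot105}}{1280}=\frac{\sqrt{1194}-12}{80}.
\]
For the final assertion, choose $(\alpha,\beta)$ on this cone and apply the final paragraph of the proof of Lemma~\ref{lem:quartic-signs} to $r+\varepsilon q_{\alpha,\beta}$: $Q_{q_{\alpha,\beta}}$ is bounded, so the body is $C^2_+$ for small $\varepsilon$, and oddness of $q_{\alpha,\beta}$ gives constant width $2r$. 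The quartic invariant is $\varepsilon^4V(q_{\alpha,\beta}^{[4]})=0$, and the body is nonspherical because $\mathscr E>0$, so that $\mathcal A_2>0$.
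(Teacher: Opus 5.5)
Most of your proposal follows the paper. This includes the reflection $(u_3,u_4)\mapsto(-u_3,-u_4)$ (after your self-correction), the identity $\mathscr E(q)=\|q\|_{L^2}^2$ in degree three, the parity elimination of $V(f^{[3]},g)$ and $V(f,g^{[3]})$, the zero cone via $640\rho^2+192\rho-105=0$, and the perturbation $r+\varepsilon q_{\alpha,\beta}$. Your remark that $\mathcal A_2=\varepsilon^2\mathscr E(q_{\alpha,\beta})>0$ excludes the ball is a small addition that the paper leaves implicit.

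For the sharp bounds you argue differently. The paper sets $x=\beta^2/\alpha^2$ and shows that $R(x)=\uppi^2V(q_{\alpha,\beta}^{[4]})/\mathscr E(q_{\alpha,\beta})^2$ increases strictly from $R(0)=-24$ to $R(\infty)=144/7$. You instead show that $V+\frac{24}{\uppi^2}\mathscr E^2$ and $\frac{144}{7\uppi^2}\mathscr E^2-V$ are quadratic forms in $(\alpha^2,\beta^2)$ with nonnegative coefficients, where the $\alpha^4$, respectively $\beta^4$, coefficient vanishes. Your coefficients are correct (in particular $\frac3{28}-\frac3{70}=\frac9{140}>0$). This yields the bounds and both equality cases without calculus. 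The paper's monotonicity gives slightly more: the ratio sweeps the whole interval and crosses zero exactly once.

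The real gap is the mixed coefficient. You state the target $V(f,f,g,g)=\uppi^2/140$ and describe possible methods, but you never carry out the computation. So \eqref{eq:cubic-plane-quartic} is assumed rather than proved, and ``the check is that the result must equal $\frac{3\uppi^2}{70}$'' uses the conclusion. The paper also only reports the outcome of its ambient expansion. However, it supplies the reduction that makes the computation routine: the determinant of the restriction of $A$ to $u^\perp$ equals $u^\top\operatorname{adj}(A)u$.

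Here is a short way to close the gap. Put $g$ in the first slot, so that $V(f,f,g,g)=\frac1{12}\int_{\SSph^3}g\,\tr\bigl(\operatorname{adj}(Q_f)Q_g\bigr)\dd\sigma$. Write $t=u_1$. Since $f$ is zonal, $Q_f$ has eigenvalue $\mu=6t-8t^3$ on $e_m=(e_1-tu)/\sqrt{1-t^2}$ and eigenvalue $\nu=-2t^3$ on the complement of $e_m$ in $u^\perp$. Hence $\operatorname{adj}(Q_f)=\mu\nu\Id+\nu(\nu-\mu)e_me_m^\top$, with $\nu(\nu-\mu)=12t^4(1-t^2)$. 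Next, $\tr Q_g=-12g$, since $g$ is a degree-three harmonic. Formula \eqref{eq:homogeneous-cubic-formula}, together with Euler's identities $D^2G(u)u=2\nabla G(u)$ and $u^\top D^2G(u)u=6G(u)$, gives $(1-t^2)\,e_m^\top Q_ge_m=G_{11}-4tG_1+(8t^2-2)G$. The factor $1-t^2$ therefore cancels, and
\[
V(f,f,g,g)=\int_{\SSph^3}\Bigl[(10u_1^4-8u_1^6)g^2+u_1^4g\bigl(2u_3-8u_1^2u_3+8u_1u_2u_4\bigr)\Bigr]\dd\sigma .
\]
After discarding the terms that are odd in some variable, eight moments from \eqref{eq:S3-multimoment} remain. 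Grouped by total degree $12$, $10$, $8$, they give $\uppi^2\bigl(-\frac5{84}+\frac1{24}+\frac1{40}\bigr)=\frac{\uppi^2}{140}$, as required. With this step supplied, the rest of your argument is complete.
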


\begin{proof}
The~orthogonal map
$(u_1,u_2,u_3,u_4)\mapsto(u_1,u_2,-u_3,-u_4)$ fixes $f$ and sends
$g$ to $-g$.  Orthogonal invariance and multilinearity of mixed
volume therefore eliminate $V(f^{[3]},g)$ and $V(f,g^{[3]})$.  The~same symmetry gives $\langle f,g\rangle_{L^2}=0$.  Since both
functions have degree three, \eqref{eq:A2-spectral} and
\eqref{eq:cubic-harmonic-values} prove
\eqref{eq:cubic-plane-energy}.

It remains to compute the~even cross term.  Let $F$ and $G$ be the~homogeneous harmonic cubics restricting to $f$ and $g$, respectively,
and set
\begin{equation*}
A_s(u)=D^2(F+sG)(u)-2(F+sG)(u)\Id.
\end{equation*}
For every symmetric $4\times4$ matrix $A$ and every unit vector $u$,
the~determinant of its bilinear restriction to $u^\perp$ is
$u^{\top}\operatorname{adj}(A)u$.  Thus
\eqref{eq:homogeneous-cubic-formula}, followed by direct polynomial
expansion, gives
\begin{equation}
\frac1{4\uppi^2}
\int_{\SSph^3}(F+sG)\,
u^{\top}\operatorname{adj}(A_s)u\dd\sigma
=-\frac3{128}+\frac3{70}s^2+\frac17s^4.
\label{eq:cubic-plane-adjugate-computation}
\end{equation}
For transparency, every coefficient in this expansion follows from
the~elementary moment formula
\begin{equation}
\frac1{\uppi^2}\int_{\SSph^3}
u_1^{2a_1}u_2^{2a_2}u_3^{2a_3}u_4^{2a_4}\dd\sigma
=
\frac{2}{(a_1+a_2+a_3+a_4+1)!}
\prod_{j=1}^4\frac{(2a_j)!}{4^{a_j}a_j!}.
\label{eq:S3-multimoment}
\end{equation}
Monomials with an~odd exponent integrate to zero.  The~identity follows
directly by writing a~Gaussian integral in Cartesian and polar
coordinates.  In particular, the~coefficient of $s^2$ in
\eqref{eq:cubic-plane-adjugate-computation} says
$6V(f,f,g,g)=3\uppi^2/70$.  Polarization and the~two pure values in
\eqref{eq:cubic-harmonic-values} now prove
\eqref{eq:cubic-plane-quartic}.

For the~normalized estimate, put $x=\beta^2/\alpha^2\geqslant0$ when
$\alpha\ne0$.  After cancelling the~common scale, the~ratio
$\uppi^2V(q_{\alpha,\beta}^{[4]})/\mathscr E(q_{\alpha,\beta})^2$
equals
\begin{equation*}
R(x)=
\frac{-3/128+3x/70+x^2/7}{(1/32+x/12)^2}.
\end{equation*}
Its derivative has the~sign of
${47}/{8960}+{3x}/{560}$,
so $R$ is strictly increasing on $[0,\infty)$.  Since
$R(0)=-24$ and $R(\infty)=144/7$, this proves
\eqref{eq:cubic-plane-sharp-bounds} and its equality statements.
Finally, the~numerator of \eqref{eq:cubic-plane-quartic} vanishes when
$640x^2+192x-105=0$.  Its unique positive root is the~value in
\eqref{eq:cubic-plane-zero-cone}.  The~strict-convexity argument at
the~end of Lemma~\ref{lem:quartic-signs} applies to any fixed
linear combination $q_{\alpha,\beta}$ and proves the~final assertion.
\end{proof}

\subsection{An~outer bound for the~quadratic--quartic feasible region}

\noindent
Although Lemma~\ref{lem:quartic-signs} rules out an~independent
sign for $V(\hodd^{[4]})$, convexity still couples it nontrivially to
the~quadratic energy.

\begin{theorem}[Necessary outer regions at constant width]
\label{thm:quartic-feasible}
Let $K\subset\RR^4$ be a~convex body of constant width $2r$, let $C$
be its central symmetral, and set
\begin{equation}
a=\mathcal A_2(K),
\qquad
b=V(\hodd^{[4]}),
\qquad
R=\kappa_4r^2.
\label{eq:abR-definition}
\end{equation}
Then
\begin{align}
W_2(K)=R-a,
\quad
W_1(K)=r(R-3a),
\quad\text{and}\quad
\Vol_4(K)=r^2(R-6a)+b.
\label{eq:quermass-abR}
\end{align}
The~classical Alexandrov--Fenchel and Rogers--Shephard inequalities
alone give the~necessary outer region
\begin{equation}
0\leqslant a<x_*R,
\qquad
6r^2a-\frac{27}{35}\kappa_4r^4
< b\leqslant
\frac{r^2a(R+3a)}{R-a}
\leqslant2r^2a,
\label{eq:classical-outer-region}
\end{equation}
where
\begin{equation}
x_*=\frac{101-4\sqrt{106}}{315}
=0.189896\ldots.
\label{eq:x-star}
\end{equation}
The~$j=1$ Godbersen inequality gives the~additional strict lower
bound
\begin{equation}
\frac{24}{5}r^2a-\frac35\kappa_4r^4<b.
\label{eq:Godbersen-lower-4d}
\end{equation}
Combining \eqref{eq:Godbersen-lower-4d} with the~three-dimensional
projection bound \eqref{eq:projection-transfer-spatial} strengthens
the~outer region to
\begin{equation}
0\leqslant a\leqslant\frac{R}{11},
\qquad
\frac{24}{5}r^2a-\frac35\kappa_4r^4
< b\leqslant
\frac{r^2a(R+3a)}{R-a}
\leqslant\frac75r^2a.
\label{eq:projected-outer-region}
\end{equation}
Equality in the~Alexandrov--Fenchel upper bound occurs only for a~ball.
Moreover, $a=0$ holds if and only if $K$ is
a~Euclidean ball up to translation, in which case $b=0$.
\end{theorem}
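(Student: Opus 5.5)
The plan is to expand every quantity in the three scalars $r$, $a$, $b$ using Theorem~\ref{thm:exact-expansion}. Each classical inequality then becomes an explicit polynomial inequality in $(a,b)$. By the approximation remarks of Section~2 we may assume $K$ is of class $C^2_+$ while deriving identities; the inequalities then pass to the limit. Since $\heven\equiv r$, multilinearity, Lemma~\ref{lem:parity}, $V(1^{[4]})=\kappa_4$ and $V(\hodd,\hodd,1,1)=-a$ give the following:
\begin{align*}
W_3(K)&=r\kappa_4,\\
W_2(K)&=V(h^{[2]},1^{[2]})=r^2\kappa_4+V(\hodd^{[2]},1^{[2]})=R-a,\\
W_1(K)&=r^3\kappa_4+3rV(\hodd^{[2]},1^{[2]})=r(R-3a),\\
\Vol_4(K)&=r^4\kappa_4+6r^2V(\hodd^{[2]},1^{[2]})+b=r^2(R-6a)+b.
\end{align*}
This is \eqref{eq:quermass-abR}, consistent with \eqref{eq:volume-constant-width-4d}. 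Also $W_1(K)>0$ forces $a<R/3$.

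\emph{Upper bound on $b$.} I apply the Alexandrov--Fenchel inequality $W_1(K)^2\geqslant W_0(K)W_2(K)$ and divide by $R-a>0$. After cancelling $R^2$, this gives
\[
b\leqslant\frac{r^2\bigl[(R-3a)^2-(R-6a)(R-a)\bigr]}{R-a}=\frac{r^2a(R+3a)}{R-a}.
\]
The comparison with $2r^2a$ is equivalent to $R+3a\leqslant2R-2a$, that is $a\leqslant R/5$. Under $a\leqslant R/11$ the ratio $(R+3a)/(R-a)$ is at most $(14/11)/(10/11)=7/5$. The equality statement comes from the equality characterization of Alexandrov--Fenchel with balls among the fixed arguments (Theorems~7.4.6 and~7.6.9 of Schneider). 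Combined with constant width, equality forces $K$ to be a ball.

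\emph{Lower bounds on $b$.} Since $C=rB^4$, we have $K-K=2rB^4$ and $\Vol_4(K-K)=16\kappa_4r^4$. Rogers--Shephard then gives $16\kappa_4r^4\leqslant70\Vol_4(K)$, strictly because a constant-width body is not a simplex. This yields $b>6r^2a-\tfrac{27}{35}\kappa_4r^4$. Chaining this lower bound with the Alexandrov--Fenchel upper bound and setting $x=a/R$ gives $(6x-\tfrac{27}{35})(1-x)<x(1+3x)$, which simplifies to $315x^2-202x+27>0$. The roots are $(101\pm4\sqrt{106})/315$, and the larger root ($\approx0.451$) exceeds $1/3$. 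Together with $a<R/3$ this forces $x<x_*$, which gives \eqref{eq:classical-outer-region}.

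For \eqref{eq:Godbersen-lower-4d} I will use Proposition~\ref{prop:Krawtchouk-transform} with $n=4$, $j=1$. The inputs are $\operatorname{Kr}_0=1$, $\operatorname{Kr}_2(1;4)=3-3=0$ and $\operatorname{Kr}_4(1;4)=-1$, together with $I_0=r^4\kappa_4$ and $I_4=b$. Hence $G_1(K)=r^2R-b$. Then $G_1\leqslant4\Vol_4(K)$ becomes $5b\geqslant24r^2a-3r^2R$. Strictness should come from the equality case in the cited Godbersen theorem, which I expect to be confined to simplices, excluded here. I expect this step to be the main obstacle. If that characterization is unavailable, I would fall back on a non-strict inequality, or derive strictness from the strict Rogers--Shephard aggregate: summing the Godbersen inequalities with weights $\binom4j$ yields Rogers--Shephard, which is strict, so at least one term is strict. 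Turning this into strictness at $j=1$ specifically still needs symmetry or extra input.

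\emph{Projection bound and the degenerate case.} By \eqref{eq:A3-A2-constant-width-4d}, $\mathcal A_3=3ra$. Then \eqref{eq:projection-transfer-spatial} gives $3ra\leqslant\tfrac3{11}\kappa_4r^3$, that is $a\leqslant R/11$, which with the above yields \eqref{eq:projected-outer-region}. Finally, Theorem~\ref{thm:spectral-A2} shows that $a=0$ if and only if $\hodd$ is linear. Since $\heven\equiv r$, this means $K$ is a translate of $rB^4$. In that case $b=0$ by \eqref{eq:linear-kernel}.
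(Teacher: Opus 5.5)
Your argument is correct. It agrees with the paper on the expansions \eqref{eq:quermass-abR}, the Alexandrov--Fenchel upper bound, the Rogers--Shephard lower bound, the quadratic $315x^2-202x+27$, the value $G_1(K)=\kappa_4r^4-b$ (your Krawtchouk evaluation is the paper's parity expansion), and the projection step $a\le R/11$.

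You differ from the paper in how you select the root and obtain strictness. The paper first proves $a\le R/5$ from the projected bound $\mathcal A_3\le\frac32W_1$ and deduces $x\le x_*$. It then rules out $x=x_*$ through the equality case of $W_1^2\ge W_0W_2$. You use only $W_1(K)=r(R-3a)>0$, i.e.\ $x<\frac13$, which lies below the larger root. You then get $x<x_*$ at once, because Rogers--Shephard is strict for non-simplices. This is more economical. It also keeps the bound on $a$ independent of the Alexandrov--Fenchel equality case, which for $W_1^2\ge W_0W_2$ and general bodies is not confined to balls (cap bodies of balls also attain it).

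Two small repairs are needed. First, you never prove $a\le R/5$ separately, so say explicitly that the comparison $\le 2r^2a$ in \eqref{eq:classical-outer-region} follows from $a<x_*R$ and $x_*<\frac15$. Second, derive $a\ge0$ and the rigidity ``$a=0$ iff ball'' from Theorem~\ref{thm:positive-defects}, not Theorem~\ref{thm:spectral-A2}. The latter is stated only for $C^2_+$ bodies, and an equivalence does not pass to limits.

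The step you leave open, strictness in \eqref{eq:Godbersen-lower-4d}, is handled in the paper by your primary plan. The paper proves $G_1\le4\Vol_4(K)$ from $-K\subset4K$ (centroid at the origin) and monotonicity. It then cites Schneider's stability theorem for the fact that equality forces a simplex. Your aggregation fallback cannot isolate $j=1$, as you suspected.

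A self-contained alternative runs as follows. Put the centroid at the origin. Brunn's concavity of $s\mapsto\Vol_3(K\cap\{\langle x,u\rangle=s\})^{1/3}$, combined with the centroid condition, gives $h_K(-u)\le4h_K(u)$. Equality holds only if this function is affine and vanishes at $s=-h_K(-u)$, in which case $F(K,u)$ has positive $3$-volume. A constant-width body is strictly convex (the paper's argument via $K-K=2rB^4$), so the inequality is strict for every $u$. By compactness, $-K\subset\lambda K$ for some $\lambda<4$, hence $G_1(K)\le\lambda\Vol_4(K)<4\Vol_4(K)$. This yields \eqref{eq:Godbersen-lower-4d} with strict inequality, without the general Godbersen theorem or a stability result.
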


\begin{proof}
Equations \eqref{eq:quermass-abR} follow from
Lemma~\ref{lem:parity} and $V(\hodd,\hodd,1,1)=-a$, after expanding
$V\bigl((r+\hodd)^{[4-j]},1^{[j]}\bigr)$
for
$j=0,1,2,3$.

First, \eqref{eq:RS-defect} with $k=3$, which is the~projected
Rogers--Shephard inequality \cite[Theorem~1]{RogersShephard}, gives
\begin{equation*}
3ra=\mathcal A_3(K)\leqslant\frac32W_1(K)
=\frac32r(R-3a),
\end{equation*}
so $a\leqslant R/5$.  Next, the~adjacent Alexandrov--Fenchel inequality
\begin{equation}
W_1(K)^2\geqslant W_0(K)W_2(K),
\label{eq:AF-adjacent-4d}
\end{equation}
which is \cite[Theorem~7.3.1]{Schneider} with two unit-ball
arguments, together with \eqref{eq:quermass-abR}, yields
\begin{equation}
b\leqslant\frac{r^2a(R+3a)}{R-a}\leqslant2r^2a.
\label{eq:quartic-AF-upper}
\end{equation}
The~second inequality uses $a\leqslant R/5$.

Likewise, \eqref{eq:RS-defect} with $k=4$ gives
\begin{equation*}
6r^2a-b=\mathcal A_4(K)
\leqslant\frac{27}{8}\Vol_4(K).
\end{equation*}
Since $\Vol_4(K)=\kappa_4r^4-\mathcal A_4(K)$, this is equivalent to
the~lower bound for $b$ in \eqref{eq:classical-outer-region}.  Put
$x=a/R$.  Compatibility of that lower bound with the~first upper
bound in \eqref{eq:quartic-AF-upper} implies
\begin{equation}
315x^2-202x+27\geqslant0.
\label{eq:x-compatibility-polynomial}
\end{equation}
Together with $0\leqslant x\leqslant1/5$, this forces $x\leqslant x_*$, where
$x_*$ is the~smaller root in \eqref{eq:x-star}.  Equality
$x=x_*$ would make the~lower and upper bounds for $b$ coincide and
hence force equality in \eqref{eq:AF-adjacent-4d}.  The~equality case
of the~Alexandrov--Fenchel inequality
\cite[Theorems~7.4.6 and~7.6.9]{Schneider} would then make $K$ a~ball,
which has $a=0$.  Therefore $a<x_*R$, proving
\eqref{eq:classical-outer-region}.

The~equality statement in the~Rogers--Shephard difference-body
inequality says that equality is possible only for a~simplex
\cite[Theorem~1]{RogersShephard}.  A~body of constant width $2r$
satisfies $K-K=2rB^4$ and is strictly convex: otherwise a~non-singleton
support face $F(K,u)$, after subtracting any point of $F(K,-u)$, would
give a~non-singleton support face of the~ball $K-K$.  A~simplex is not
strictly convex.  Hence the~lower bound for $b$ is strict.
Likewise, the~equality case cited for
\eqref{eq:AF-adjacent-4d} proves that the~upper bound is strict unless
$K$ is a~ball.

The~case $j=1$ of \eqref{eq:Godbersen-inequalities} reads in dimension
four
\begin{equation*}
V(-K,K,K,K)\leqslant4\Vol_4(K).
\end{equation*}
Indeed, after translating the~centroid of $K$ to the~origin, the~classical inclusion $-K\subset4K$ and monotonicity of mixed volume
give this estimate.  Schneider's stability theorem
\cite{SchneiderStability} supplies the~equality characterization: a~full-dimensional equality case is a~simplex.
Since $K$ has constant width and $h_K=r+\hodd$, parity gives
\begin{equation*}
V(-K,K,K,K)=\kappa_4r^4-b.
\end{equation*}
Substitution of the~last identity and
$\Vol_4(K)=\kappa_4r^4-6r^2a+b$ yields
\eqref{eq:Godbersen-lower-4d}.  Equality in the~Godbersen inequality
would force $K$ to be a~simplex, so the~lower bound is strict for a~constant-width body.

For the~stronger projection-derived bound, combine
\eqref{eq:A3-A2-constant-width-4d} with
\eqref{eq:projection-transfer-spatial}:
\begin{equation*}
3ra=\mathcal A_3(K)\leqslant\frac3{11}\kappa_4r^3,
\end{equation*}
and hence $a\leqslant R/11$.  Substitution into the~last quotient in
\eqref{eq:quartic-AF-upper} gives
$(R+3a)/(R-a)\leqslant7/5$, proving
the~upper bounds in \eqref{eq:projected-outer-region}.  On the~same
range, the~difference between the~Godbersen lower bound and the~Rogers--Shephard lower bound is
\begin{equation*}
\frac6{35}r^2(R-7a)>0.
\end{equation*}
Thus \eqref{eq:Godbersen-lower-4d} supplies the~lower bound in
\eqref{eq:projected-outer-region}.  Finally, Theorem~
\ref{thm:positive-defects} shows that $a=0$ is equivalent to central
symmetry up to translation.  Central symmetry together with constant
width forces a~ball.  Its odd support component is linear, so
\eqref{eq:linear-kernel} gives $b=0$.
\end{proof}

\section{Constant-width expansions in arbitrary dimension}

\noindent
The~preceding low-dimensional identities are instances of one general
formula.  Suppose that $K\subset\RR^n$ has constant width $2r$, so
that
$h=r+\hodd$, where $\hodd(-u)=-\hodd(u)$.
For $p=0,\ldots,\lfloor n/2\rfloor$, define the~even Wigner
quermassintegrals
\begin{equation}
\mathcal Q_{2p}(\hodd)
=
V\left(\hodd^{[2p]},1^{[n-2p]}\right),
\qquad
\mathcal Q_0(\hodd)=\kappa_n.
\label{eq:even-Wigner-quermassintegrals}
\end{equation}
The~quadratic term satisfies
$\mathcal Q_2(\hodd)=-\mathcal A_2(K)\leqslant0$,
whereas no independent sign is available for all higher terms:
Lemma~\ref{lem:quartic-signs} gives both signs already for
$\mathcal Q_4$ in dimension four.

\begin{theorem}[All quermassintegrals at constant width]
\label{thm:CW-expansion}
Let $K\subset\RR^n$ be a~convex body of constant width $2r$, and write
$h_K=r+\hodd$ with $\hodd$ odd.  For $j=0,\ldots,n$,
\begin{equation}
W_j(K)
=
\sum_{p=0}^{\lfloor(n-j)/2\rfloor}
\binom{n-j}{2p}
r^{\,n-j-2p}
\mathcal Q_{2p}(\hodd).
\label{eq:constant-width-quermass-expansion}
\end{equation}
For $n\geqslant3$, volume and surface area satisfy
\begin{align}
\Vol_n(K)
=
\frac{r}{n-2}S(K)
-\frac{2\kappa_n}{n-2}r^n+
\frac{2}{n-2}
\sum_{p=2}^{\lfloor n/2\rfloor}
(p-1)\binom{n}{2p}
r^{\,n-2p}\mathcal Q_{2p}(\hodd).
\label{eq:constant-width-volume-surface}
\end{align}
\end{theorem}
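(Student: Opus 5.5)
The plan is a direct multilinear expansion followed by a linear elimination, exactly parallel to the three- and four-dimensional Blaschke-type computations in \eqref{eq:Blaschke-3d} and \eqref{eq:Blaschke-4d}.

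\textbf{Setting up.} First I note that $\hodd=h_K-r$ is a formal difference of support functions (of $K$ and $rB^n$). So the multilinear mixed-volume form applies to it without any smoothness assumption, and the statement holds for arbitrary constant-width bodies. If one prefers, one can instead work with $C^\infty$ approximants of the same width, as in the regularity subsection, and pass to the limit.

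\textbf{Step 1: the quermassintegral expansion.} By \eqref{eq:quermass-normalization}, $W_j(K)=V\bigl((r+\hodd)^{[n-j]},1^{[j]}\bigr)$. The constant $r$ is $r$ times the support function $1$ of $B^n$. Expanding by multilinearity gives
\begin{equation*}
W_j(K)=\sum_{s=0}^{n-j}\binom{n-j}{s}r^{\,n-j-s}\,V\bigl(\hodd^{[s]},1^{[n-s]}\bigr).
\end{equation*}
Lemma~\ref{lem:parity} kills every odd $s$, since $1$ is even and $\hodd$ is odd. Writing $s=2p$ and using the definition \eqref{eq:even-Wigner-quermassintegrals} yields \eqref{eq:constant-width-quermass-expansion}. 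The $p=0$ term is $r^{n-j}\kappa_n$.

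\textbf{Step 2: the volume--surface relation.} Apply Step 1 with $j=0$ and $j=1$, and use $S(K)=nW_1(K)$ from \eqref{eq:quermass-special}. I then form $\Vol_n(K)-\frac{r}{n-2}S(K)$ and collect the coefficient of $r^{n-2p}\mathcal Q_{2p}(\hodd)$. This coefficient is
\begin{equation*}
\binom{n}{2p}-\frac{n}{n-2}\binom{n-1}{2p}.
\end{equation*}
Using $\binom{n-1}{2p}=\frac{n-2p}{n}\binom{n}{2p}$, it simplifies to $\binom{n}{2p}\frac{2(p-1)}{n-2}$. This identity remains valid in the top case $2p=n$ for even $n$, where $\binom{n-1}{n}=0$.

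Reading off the individual values of $p$:
\begin{enumerate}
\item For $p=0$ the coefficient is $-\frac{2}{n-2}$, which produces the term $-\frac{2\kappa_n}{n-2}r^n$.
\item For $p=1$ the coefficient vanishes. This is precisely why the quadratic Wigner energy $\mathcal A_2$ drops out of the relation.
\item For $p\geqslant2$ the remaining terms are exactly the sum in \eqref{eq:constant-width-volume-surface}.
\end{enumerate}
Rearranging gives the formula. The hypothesis $n\geqslant3$ is only needed to divide by $n-2$.

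\textbf{Checks and the main obstacle.} There is no real obstacle; the only delicate points are bookkeeping. One is the binomial simplification at the boundary index $2p=n$. The other is confirming consistency with the special cases: for $n=3$ the sum is empty and the formula reduces to \eqref{eq:Blaschke-3d}. For $n=4$ only $p=2$ contributes, with coefficient $\frac{2}{2}\cdot1\cdot\binom44=1$, which recovers \eqref{eq:Blaschke-4d}. I would verify these two cases explicitly as a check on the signs.
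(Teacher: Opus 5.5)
Your proposal is correct and follows the paper's proof essentially verbatim: you expand $V\bigl((r+\hodd)^{[n-j]},1^{[j]}\bigr)$ by multilinearity and use Lemma~\ref{lem:parity} to remove the odd powers, then eliminate via $n\binom{n-1}{2p}=(n-2p)\binom{n}{2p}$ to obtain the coefficient $\frac{2(p-1)}{n-2}\binom{n}{2p}$. Your remarks on the boundary index $2p=n$, and on applying the multilinear extension directly to the formal difference $h_K-r$, are correct and only make explicit what the paper leaves implicit.
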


\begin{proof}
Expand
\begin{equation*}
W_j(K)=V\left((r+\hodd)^{[n-j]},1^{[j]}\right)
\end{equation*}
by multilinearity.  Lemma~\ref{lem:parity} removes the~odd powers of
$\hodd$, giving \eqref{eq:constant-width-quermass-expansion}.

For $j=0$ and $j=1$, formula
\eqref{eq:constant-width-quermass-expansion} gives
\begin{align*}
\Vol_n(K)
=
\sum_{p=0}^{\lfloor n/2\rfloor}
\binom{n}{2p}r^{n-2p}\mathcal Q_{2p}(\hodd)
\quad\text{and}\quad
S(K)
=
n
\sum_{p=0}^{\lfloor(n-1)/2\rfloor}
\binom{n-1}{2p}r^{n-1-2p}\mathcal Q_{2p}(\hodd).
\end{align*}
Use
$n\binom{n-1}{2p}
=
(n-2p)\binom{n}{2p}$.
The~constant term and the~quadratic term are isolated by the~first two
terms on the~right-hand side of
\eqref{eq:constant-width-volume-surface}.  The~remaining coefficient is
\begin{equation*}
1-\frac{n-2p}{n-2}
=
\frac{2(p-1)}{n-2}.
\end{equation*}
This proves \eqref{eq:constant-width-volume-surface}.
\end{proof}

For $n=3$, the~sum in \eqref{eq:constant-width-volume-surface} is
empty and one recovers \eqref{eq:Blaschke-3d}.  For $n=4$, only
$p=2$ remains, and \eqref{eq:constant-width-volume-surface} is exactly
\eqref{eq:Blaschke-4d}.

\begin{corollary}[The~even-dimensional residual]
\label{cor:even-dimensional-residual}
Let $K\subset\RR^n$ be a~convex body of constant width $2r$, and write
$h_K=r+\hodd$ with $\hodd$ odd.  If $n=2m\geqslant4$, then
\begin{align}
\Vol_n(K)
=\frac{r}{n-2}S(K)-\frac{2\kappa_n}{n-2}r^n+
\frac{2}{n-2}
\sum_{p=2}^{m-1}(p-1)\binom{n}{2p}
r^{n-2p}\mathcal Q_{2p}(\hodd)+V(\hodd^{[n]}).
\label{eq:even-dimensional-residual}
\end{align}
The~final summand is the~Gauss-cover algebraic $n$-volume of the~Wigner hedgehog and occurs with coefficient one.  Its projectively
reduced normalization is $\frac12V(\hodd^{[n]})$, because
$\mathbb{RP}^{n-1}$ is orientable and the~antipodal cover is
orientation preserving.  If $n$ is odd, then
$V(\hodd^{[n]})=0$ by parity and $\mathbb{RP}^{n-1}$ is
non-orientable.
\end{corollary}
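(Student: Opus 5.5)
The plan is to read the corollary off Theorem~\ref{thm:CW-expansion}, more precisely formula \eqref{eq:constant-width-volume-surface}, by splitting off its top summand. For $n=2m\geqslant4$ we have $\lfloor n/2\rfloor=m$, so the sum over $p=2,\ldots,m$ separates into the range $p=2,\ldots,m-1$ and the single term $p=m$. At $p=m$ the power $r^{n-2p}$ equals $1$, the binomial coefficient $\binom{n}{n}$ equals $1$, and $\mathcal Q_{n}(\hodd)=V(\hodd^{[n]})$ by \eqref{eq:even-Wigner-quermassintegrals}. Its coefficient is therefore
\begin{equation*}
\frac{2(m-1)}{n-2}=\frac{2(m-1)}{2m-2}=1,
\end{equation*}
which gives \eqref{eq:even-dimensional-residual} exactly. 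For $n=4$ the middle sum is empty, and the formula reduces to \eqref{eq:Blaschke-4d}, which I will note as a consistency check.

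Next I would justify the interpretive statements. By \eqref{eq:mixed-volume-integral},
\begin{equation*}
V(\hodd^{[n]})=\frac1n\int_{\SSph^{n-1}}\hodd\det Q_{\hodd}\dd\sigma.
\end{equation*}
This is the Gauss-cover algebraic volume of the hedgehog $x_{\hodd}$, in the sense of the hedgehog mixed-volume framework, exactly as in \eqref{eq:Wigner-volume-cover}. By Proposition~\ref{prop:geometry-decomposition}, $x_{\hodd}$ factors through the antipodal quotient $\SSph^{n-1}\to\mathbb{RP}^{n-1}$.

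To get the factor $\tfrac12$ I need the antipodal map $u\mapsto-u$ on $\SSph^{n-1}$ to preserve orientation. Its degree is $(-1)^n$, which equals $+1$ when $n$ is even. Hence $\mathbb{RP}^{n-1}$ is orientable and the double cover preserves orientation. I will also check the integrand directly. Since $\hodd$ is odd, its Hessian is odd as well, so $Q_{\hodd}(-u)=-Q_{\hodd}(u)$ under the identification of both tangent spaces with $u^\perp$. Then $\det Q_{\hodd}(-u)=(-1)^{n-1}\det Q_{\hodd}(u)$, and the integrand satisfies
\begin{equation*}
\hodd(-u)\det Q_{\hodd}(-u)=(-1)^n\,\hodd(u)\det Q_{\hodd}(u).
\end{equation*}
For even $n$ the integrand is therefore even, the two sheets contribute equal amounts, and the projectively reduced volume is $\tfrac12V(\hodd^{[n]})$.

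For odd $n$ the same computation makes the integrand odd, so the integral vanishes. This agrees with Lemma~\ref{lem:parity} applied to $n$ odd arguments. In this case the antipodal map has degree $-1$, which is the standard reason $\mathbb{RP}^{n-1}$ is non-orientable.

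The only point needing care is the orientation and sign bookkeeping in the parity of $\det Q_{\hodd}$ under the identification of tangent spaces. I would handle it exactly as in the proof of Proposition~\ref{prop:geometry-decomposition}, using that the antipodal map has differential $-\Id$. The rest is direct substitution.
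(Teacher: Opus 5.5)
Your proposal is correct and matches the paper's proof: you split off the $p=m$ term of \eqref{eq:constant-width-volume-surface}, compute its coefficient $\frac{2(m-1)}{2m-2}\binom{2m}{2m}=1$, and justify the orientation claims. For the latter you use the degree $(-1)^n$ of the antipodal map on $\SSph^{n-1}$, together with Lemma~\ref{lem:parity} for odd $n$. Your extra direct check, that $\hodd\det Q_{\hodd}$ has antipodal parity $(-1)^n$, is correct and makes the factor $\frac12$ and the vanishing for odd $n$ explicit at the level of the integrand.
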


\begin{proof}
In \eqref{eq:constant-width-volume-surface}, the~coefficient of the~last term $p=m$ is
$\frac{2(m-1)}{2m-2}\binom{2m}{2m}=1$.
The~orientation assertions use the~standard criterion that
$\mathbb{RP}^d$ is orientable exactly when $d$ is odd.  Equivalently,
the~antipodal map on $\SSph^d$ has degree $(-1)^{d+1}$.  For odd $n$,
Lemma~\ref{lem:parity} annihilates $V(\hodd^{[n]})$.
\end{proof}

The~four-dimensional term in \eqref{eq:Blaschke-4d} is the~first
non-planar example of this residual, and
Lemma~\ref{lem:quartic-signs} shows that it is sign-indefinite
even on arbitrarily small constant-width perturbations of a~ball.

\section*{Declarations}

\subsection*{Funding}
\noindent
The~author received no financial support for the~research, authorship,
or publication of this article.

\subsection*{Conflict of interest}
\noindent
The~author declares that there are no competing interests.


\subsection*{Data availability}
\noindent
No data were generated or analyzed in this theoretical study.

\end{document}